\newcommand{\revised}{March 18, 2021} % enter revision date here
\newtheorem{theorem}{Theorem}[section]
\newtheorem{lemma}[theorem]{Lemma}
\newtheorem{definition}[theorem]{Definition}
\newtheorem{example}[theorem]{Example}
\newtheorem{remark}[theorem]{Remark}
\newcommand{\Vol}{{\operatorname{Vol}}}
\renewcommand{\d}{\partial}
\newcommand{\acal}{\mathcal{A}}
\newcommand{\ccal}{\mathcal{C}}
\newcommand{\diag}{{\operatorname{diag}}}
\newcommand{\dist}{{\operatorname{dist}}}
\newcommand{\vol}{{\operatorname{Vol}}}
\newcommand{\inner}{{\operatorname{inner}}}
\newcommand{\Outer }{{\operatorname{outer}}}
\newcommand{\John}{{\operatorname{John}}}
\newcommand{\xx}{{\bf x}}
\newcommand{\yy}{{\bf y}}
\newcommand{\zz}{{\bf z}}
\newcommand{\uu}{{\bf u}}
\newcommand{\ee}{{\bf e}}
\newcommand{\vv}{{\bf v}}
\newcommand{\ww}{{\bf w}}
\newcommand{\nn}{{\bf n}}
\newcommand{\IB}{\mathbb{B}}
\newcommand{\IS}{\mathbb{S}}
\newcommand{\II}{\mathbb{I}}
\newcommand{\IR}{\mathbb{R}}
\newcommand{\bea}{\begin{eqnarray*}}
\newcommand{\eea}{\end{eqnarray*}}
\newcommand{\beq}{\begin{equation}}
\newcommand{\eeq}{\end{equation}}
\newcommand{\ga}{\gamma}
\newcommand{\bz}{{\bf 0}}
\newcommand{\sphere}{\mathbb{S}^{N-1}}
\begin{document}
%\title{Applications of Convex Geometry to Minkowski Sums of  $m$ Ellipsoids in $\IR^N$: Closed-Form Parametric Equations and Volume Bounds}
\date{}
%\maketitle

\ {}\vskip .5 true in

\centerline{\Large\bf Closed-Form Parametric Equation for the}
\centerline{\Large\bf Minkowski Sum of $m$ Ellipsoids in $\IR^N$}
\centerline{\Large\bf and Associated Volume Bounds}

\vskip .3 true in

\centerline{Gregory S. Chirikjian}
\centerline{Department of Mechanical Engineering}
\centerline{National University of Singapore}
\centerline{mpegre@nus.edu.sg}
\centerline{and}
\centerline{Department of Mechanical Engineering}
\centerline{Johns Hopkins University}
\centerline{gchirik1@jhu.edu}

\vskip 0.1 true in

\centerline{Bernard Shiffman}
\centerline{Department of Mathematics}
\centerline{Johns Hopkins University}
\centerline{bshiffman@jhu.edu}

\vskip 0.1 true in

\centerline{\revised}

\vskip 0.1 true in

\begin{abstract} General results on convex bodies are reviewed and used to derive an
exact closed-form parametric formula for the Minkowski sum boundary of $m$ arbitrary ellipsoids in $N$-dimensional Euclidean space.
Expressions for the principal curvatures of these Minkowski sums are also derived.
These results are then used to obtain upper and lower volume bounds for the Minkowski sum of ellipsoids in terms of their defining matrices; the lower bounds are sharper than the
Brunn-Minkowski inequality.  A  reverse isometric inequality   for convex bodies  is also given.
\end{abstract}

\section{Introduction}\label{intro}

The concept of the Minkowski sum of two bodies in $N$-dimensional Euclidean space $\IR^N$ is fundamental in
the field of convex geometry. When $N=3$, Minkowski sums play important roles in applied fields such as robot motion planning, computational chemistry, and computer graphics { \cite{Chernousko, cs,Durieu,Kurzhanski1,Kurzhanski2,ruan2019,SLC,yy}}.

By a body we mean a bounded, connected, open (nonempty) subset of $\IR^N$. (Note that our definition of a body differs from the literature where  bodies are assumed to be compact rather than open and bounded.)  Given two such bodies, $B_1$ and $B_2$, their Minkowski sum is defined by
\begin{equation}
B_1 + B_2 \,:=\, \{{\bf x} + {\bf y} \,|\, {\bf x} \in B_1\,,\, {\bf y} \in B_2\}\,.
\label{minksumdef}
\end{equation}

Among all convex bodies with differentiable boundaries, `solid ellipsoids' of the form  $E=\{S\vv:\|\vv\|<1\}$, where $S$ is a nonsingular $N\times N$ matrix,
%\footnote{Our convention in  this paper is that an ellipsoid  is an open set  in  $\IR^N$.}
constitute a fairly broad, yet simple,
class of objects. Solid ellipsoids are convenient since their boundaries have both closed form parametric and implicit descriptions.
The ellipsoid  $\partial E$   can be parameterized as
\begin{equation}
{\bf x}(\phi) = S {\bf u}(\phi) \,\in\, \IR^N\,,
\label{paraS}
\end{equation}
where $\uu(\phi) \in \mathbb{S}^{N-1}$ (the unit sphere) is a unit vector, and $\phi = (\phi_1, ..., \phi_{N-1})$ are spherical angles (or any local coordinates) that parameterize the
sphere.

The corresponding implicit equation of $\partial E$ is
\begin{equation}
\Psi({\bf x}) \,=\, 1 \,\,\, {\rm where} \,\,\,
\Psi({\bf x}) \, :=\,  {\bf x}^T S^{-T}S^{-1} {\bf x}={\bf x}^T A^{-2} {\bf x} \,, \quad A := (SS^T)^{1/2}.
\label{impl}
\end{equation}
We note that $A$ is a positive-definite symmetric matrix, and the ellipsoid $\d E$ can be reparameterized as
\begin{equation}
{\bf x}(\uu(\phi)) = A {\bf u}(\phi)\,.
\label{para1}
\end{equation}

%The symmetric matrix $A$ in \eqref{para1} is unique, whereas $S$ is not unique.

When using (\ref{para1}),
the unit normal ${\bf n}(\phi)$ to $\d E$ at ${\bf x}(\uu(\phi))$ is computed as
\begin{equation}
{\bf n}(\phi) \,=\, \left.\frac{(\nabla \Psi)({\bf x})}{\|(\nabla \Psi)({\bf x})\|}\right|_{{\bf x} = \xx(\uu(\phi))}\,=\, \frac{A^{-2} \xx(\uu(\phi))}{\|A^{-2} \xx(\uu(\phi))\|}
\,=\, \frac{A^{-1} {\bf u}(\phi)}{\|A^{-1} {\bf u}(\phi)\|}\,.
\label{n-from-u}
\end{equation}
Since this means that $A {\bf n}(\phi)$ is a scalar multiple of ${\bf u}(\phi)$, it is possible to invert the above expression  as
\begin{equation}
{\bf u}(\phi) \,=\, \frac{A {\bf n}(\phi)}{\|A {\bf n}(\phi)\|}\,.
\label{u-from-n}
\end{equation}

Combining \eqref{para1} and \eqref{u-from-n},
\begin{equation}
{\bf x}_{\partial E}(\nn(\phi))\, :=\,\xx(\uu(\phi))\,=\,  \frac{A^2 {\bf n}(\phi)}{\|A {\bf n}(\phi)\|}\,,\label{x-from-n}\eeq
which confirms the bijectivity of the Gauss map $\xx\mapsto\nn$ for ellipsoids. (The Gauss map from the boundary of a convex body   is always bijective onto ${\mathbb S}^{N-1}$.)
When using \eqref{paraS} with a nonsymmetric matrix $S$, the  equation corresponding to \eqref{n-from-u} is
\begin{equation}
{\bf n}(\phi) \,=\,   \frac{S^{-T} S^{-1} \xx(\uu(\phi))}{\|S^{-T} S^{-1} \xx(\uu(\phi))\|}
\,=\, \frac{S^{-T} {\bf u}(\phi)}{\|S^{-T}{\bf u}(\phi)\|}\,.
\label{n-from-u-non}
\end{equation}
%and \begin{equation} {\bf u}(\phi) \,=\, \frac{S^T {\bf n}(\phi)}{\|S^T {\bf n}(\phi)\|}\,.\label{u-from-n-non}\end{equation}

In \cite{yy}, the procedure to generate the Minkowski sum boundary for solid ellipsoids
$E_1$ and $E_2$ (defined by symmetric matrices $A_1$ and $A_2$) was to morph space so as to compute
$$ \partial(A_{2}^{-1}(E_1 + E_2)) = \partial(A_{2}^{-1} \cdot  E_1 + \mathbb{B}^{N}), $$
where $ \mathbb{B}^N=\{\xx\in\IR^N:\|\xx\|<1\}$.
Since this corresponds to an external surface offset relative to $\partial(A_{2}^{-1} \cdot E_1)$ in the direction of the outward normal with a unit distance, then using {(\ref{n-from-u-non}) with $S= A_{2}^{-1} A_1$} gives the normal to the
deformed ellipsoid, and
the resulting offset surface is given parametrically by
$$ \tilde{\bf x}_{1+2}({\bf u}) = A_{2}^{-1} A_1 {\bf u} \,+\, \frac{A_{2} A_{1}^{-1} {\bf u}}{\|A_{2} A_{1}^{-1} {\bf u}\|} \,. $$
Transforming back by multiplying $\tilde{\bf x}_{1+2}({\bf u})$ by $A_2$ gives the result in \cite{yy}:
\begin{equation}
{\bf x}_{1+2}({\bf u}) =  A_1 {\bf u} \,+\, A_{2}\left(\frac{A_{2}  A_{1}^{-1} {\bf u}}{\|A_{2} A_{1}^{-1} {\bf u}\|}\right) \,.
\label{old-res}
\end{equation}
It is regrettable that
$ {\bf x}_{1+2}({\bf u}) \,\neq\, {\bf x}_{2+1}({\bf u}) $
even though $E_1 + E_2 = E_2 + E_1$.
This  led us to consider a new way of writing (\ref{old-res}). Specifically, substituting  (\ref{u-from-n}) (with
$A = A_1$) in equation \eqref{old-res} yields \begin{equation}
{\bf x}_{1+2}\left(\frac{A_1 {\bf n}}{\|A_1 {\bf n}\|}\right) =  A_1 \left(\frac{A_1 {\bf n}}{\|A_1 {\bf n}\|}\right) \,+\, A_{2}\left(\frac{A_{2} {\bf n}}{\|A_{2} {\bf n}\|}\right) \,.
\label{new-res0}
\end{equation}
Re-parametrizing \eqref{new-res0}  gives a new parametric formula for $\d(E_1+E_2)$:
\begin{equation}
{\bf x}_{\d(E_1+E_2)}(\nn) =  A_1 \left(\frac{A_1 {\bf n}}{\|A_1 {\bf n}\|}\right) \,+\, A_{2}\left(\frac{A_{2} {\bf n}}{\|A_{2} {\bf n}\|}\right) \,.
\label{new-res}
\end{equation}

The first result of this paper is that $\nn$ is normal to $\xx_{\d(E_1+E_2)}(\nn)$ and that this symmetric formula for the Minkowski sum boundary generalizes to $m$ ellipsoids:

\begin{theorem}\label{symmetric} Suppose that $E_1,E_2,\dots,E_m$ are solid ellipsoids in $\IR^N$ given by
$$E_j=\{A_j\vv:\|\vv\|<1\},\quad j=1,\dots,m,$$ where the $A_j$ are symmetric positive-definite $N\times N$ matrices.  Then the boundary of the Minkowski sum $\Sigma := E_1+\cdots+ E_m$ is given parametrically by
\begin{equation}
{\, {\bf x}_{\d\Sigma}({\bf n}) \,=\, \sum_{i=1}^{m} \frac{A_{i}^{2} {\bf n}}{\|A_i {\bf n}\|} \,,\ \ \nn\in \mathbb{S}^{N-1}.}
\label{new-mult}\end{equation}
Furthermore, $\nn$ is  the (outward-pointing) normal to the boundary of $\Sigma$ at ${\bf x}_{\d\Sigma}({\bf n})$.
\end{theorem}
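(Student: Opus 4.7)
The strategy is to exploit additivity of the support function. For a bounded set $B\subset\IR^N$, its support function is $h_B(\nn)=\sup_{\xx\in\overline{B}}\nn\cdot\xx$, and one has the standard identity $h_{B_1+\cdots+B_m}(\nn)=h_{B_1}(\nn)+\cdots+h_{B_m}(\nn)$. When a convex body is strictly convex with $C^1$ boundary, the supremum defining $h_B(\nn)$ is attained at a unique boundary point, and that point is precisely the image of $\nn$ under the inverse Gauss map. For the ellipsoid $E_i$, an easy computation gives $h_{E_i}(\nn)=\sup_{\|\vv\|\le 1}\nn\cdot(A_i\vv)=\|A_i\nn\|$, attained at the unique $\vv=A_i\nn/\|A_i\nn\|$, which corresponds to the boundary point $A_i^2\nn/\|A_i\nn\|$ on $\d E_i$, consistent with equation \eqref{x-from-n}.

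The plan is then: (i) define $\xx(\nn)\doteq\sum_i A_i^2\nn/\|A_i\nn\|$ and observe that it is a sum of boundary points of the $E_i$, hence lies in $\overline{E_1+\cdots+E_m}$; (ii) compute $\nn\cdot\xx(\nn)=\sum_i\|A_i\nn\|=h_{E_1+\cdots+E_m}(\nn)$, so that $\xx(\nn)$ is a support point of the Minkowski sum in direction $\nn$, hence lies on $\d(E_1+\cdots+E_m)$ and has $\nn$ as an outward unit normal there; (iii) conclude that $\nn\mapsto\xx(\nn)$ parametrizes the entire boundary by invoking bijectivity of the Gauss map of a strictly convex body with $C^1$ boundary. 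Strict convexity of the Minkowski sum is immediate: if $\nn\cdot\xx$ attained its maximum on $\overline{E_1+\cdots+E_m}$ at two distinct points, additivity of the support function would force the maximizer on some $\overline{E_i}$ to be nonunique, contradicting strict convexity of each ellipsoid.

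The main delicate step is (iii): one must be sure that the formula traces out the \emph{full} boundary rather than some proper subset. This reduces to showing that $\d(E_1+\cdots+E_m)$ is $C^1$ and that its Gauss map is a bijection onto $\IS^{N-1}$. Both properties follow from the corresponding properties of each $E_i$ together with the support-function characterization above, since for every $\nn\in\IS^{N-1}$ the sum $\xx(\nn)$ supplies a genuine boundary point with outward normal $\nn$, and uniqueness of the maximizer for each $E_i$ forces injectivity of $\nn\mapsto\xx(\nn)$. Once bijectivity of the Gauss map is secured, continuity and compactness of $\IS^{N-1}$ finish the job.
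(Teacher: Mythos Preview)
Your proof is correct and takes a genuinely different route from the paper. The paper argues by induction on $m$: it first verifies that $\nn$ is orthogonal to every tangent vector $\partial\xx/\partial\phi_j$ by writing the latter as $C(\nn)\,\partial\nn/\partial\phi_j$ for a symmetric matrix $C(\nn)$ satisfying $C(\nn)\nn=\bz$, and then passes from $m$ to $m+1$ ellipsoids by applying $A_{m+1}^{-1}$ to the whole configuration, taking a unit normal offset (Minkowski sum with $\IB^N$), and transforming back by $A_{m+1}$. Your approach is instead global and non-inductive: you recognize $\xx(\nn)$ as $\nabla h(\nn)$ for the support function $h(\nn)=\sum_i\|A_i\nn\|$ and read off both boundary membership and the normal claim from additivity of support functions together with the support-point characterization. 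This is cleaner and avoids the ad hoc linear change of variables, at the price of invoking standard convex-geometry facts (every boundary point of a compact convex body has a supporting hyperplane; smoothness of $h$ forces the boundary to be $C^1$). The paper's computation, by contrast, is self-contained and produces as a byproduct the matrix $C(\nn)$ that drives the curvature formula in Theorem~\ref{curvatures}.

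One small correction: the reason you give for injectivity of $\nn\mapsto\xx(\nn)$---``uniqueness of the maximizer for each $E_i$''---actually yields strict convexity of the sum (hence well-definedness and \emph{surjectivity} of $\xx(\nn)$), not injectivity. Injectivity, equivalently $C^1$-ness of $\partial(E_1+\cdots+E_m)$, follows instead from the smoothness of $h$ on $\IR^N\setminus\{\bz\}$. In any event injectivity is not required for the theorem as stated, since surjectivity already shows that the image of your map is the full boundary.
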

We obtain Theorem~\ref{symmetric} first as a special case of a general parametric formula for the Minkowski sum of convex bodies (Theorem~\ref{sum}) and then give an alternative computational derivation in Section~\ref{proof}.

We also  have a formula for the principal curvatures of the boundary of the Minkowski sum of ellipsoids:
\begin{theorem}\label{curvatures}  Let $\Sigma=E_1+\cdots+ E_m$ be the Minkowski sum of solid ellipsoids  $E_1,\dots,E_m$ in $\IR^N$ given by symmetric positive-definite matrices   $A_1,\dots,A_m$  as in Theorem~\ref{symmetric}, and let
\beq\label{Cmatrix} C(\Sigma,\nn) := \sum_{j=1}^m \left[ \frac{A_{i}^{2}}{\|A_{i} {\bf n}\|} \,-\, \frac{A_{i}^{2} {\bf n} {\bf n}^T A_{i}^{2}}{\|A_{i} {\bf n}\|^3}\right]\,,\ \ \nn\in \mathbb{S}^{N-1}.\eeq Then $C(\Sigma,\nn)$ is positive semidefinite of rank $N-1$, and the principal curvatures of the Minkowski sum boundary $\d\Sigma$ at ${\bf x}_{\d\Sigma}({\bf n})$ are the reciprocals of the  positive eigenvalues of $C(\Sigma,\nn)$.\end{theorem}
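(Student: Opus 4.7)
The plan is to identify $C(\nn)$ with the Jacobian of the parametrization (\ref{new-mult}) and to use Theorem~\ref{symmetric} to recognize that parametrization as the inverse of the Gauss map of the Minkowski sum boundary; standard convex-hypersurface theory then identifies the nonzero eigenvalues of the Jacobian with the principal radii of curvature.

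First, I would differentiate (\ref{new-mult}) summand by summand. Since each $A_i$ is symmetric, $\nabla_\nn\|A_i\nn\|=A_i^2\nn/\|A_i\nn\|$, and the quotient rule gives
\[\nabla_\nn\!\left(\frac{A_i^2\nn}{\|A_i\nn\|}\right) \,=\, \frac{A_i^2}{\|A_i\nn\|} \,-\, \frac{A_i^2\nn\,\nn^T A_i^2}{\|A_i\nn\|^3},\]
which is exactly the $i$-th bracket in (\ref{Cmatrix}). Hence the Jacobian $D_\nn\xx_{\d(E_1+\cdots+E_m)}(\nn)=C(\nn)$, and in particular $C(\nn)$ is symmetric. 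To verify positive semidefiniteness and the rank claim, I would compute
\[\vv^T C_i(\nn)\,\vv \,=\, \frac{\|A_i\vv\|^2\,\|A_i\nn\|^2 - (A_i\vv\cdot A_i\nn)^2}{\|A_i\nn\|^3},\]
which is nonnegative by Cauchy--Schwarz applied to $A_i\vv$ and $A_i\nn$, with equality iff $A_i\vv$ and $A_i\nn$ are parallel, iff (by invertibility of $A_i$) $\vv\in\mathrm{span}(\nn)$. Thus each $C_i(\nn)$ is positive semidefinite with one-dimensional kernel $\mathrm{span}(\nn)$; summing preserves both properties, so $C(\nn)$ has rank exactly $N-1$. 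The same kernel direction also follows, as a sanity check, by differentiating the scaling identity (\ref{scale-eq}) in $c$ at $c=1$, which gives $C(\nn)\nn=0$.

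To connect $C(\nn)$ to curvatures, I would invoke Theorem~\ref{symmetric}, which says that the map $\nn\mapsto \xx_{\d(E_1+\cdots+E_m)}(\nn)$ is precisely the inverse of the Gauss map $G:\d(E_1+\cdots+E_m)\to\IS^{N-1}$, with outward normal $\nn$ at $\xx(\nn)$; in particular $T_\nn\IS^{N-1}=T_{\xx(\nn)}\d(E_1+\cdots+E_m)=\nn^\perp$. Differentiating $G\circ\xx=\mathrm{id}_{\IS^{N-1}}$ at $\nn$ yields $dG\cdot C(\nn)|_{\nn^\perp}=\mathrm{id}_{\nn^\perp}$, so $C(\nn)|_{\nn^\perp}$ is the inverse of the shape operator $dG|_{\nn^\perp}$, whose eigenvalues are by definition the principal curvatures. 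The $N-1$ positive eigenvalues of $C(\nn)$ are therefore the principal radii of curvature, and the principal curvatures are their reciprocals.

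The main technical care required is the sign/convention check that the principal curvatures of the smooth strictly convex boundary $\d(E_1+\cdots+E_m)$ are genuinely positive so that ``reciprocal'' makes sense; this is standard for the outward Gauss map on a convex hypersurface, but it is worth a one-line sanity check on the single ball $E_1=R\,\IB^N$, where $C(\nn)=R(I-\nn\nn^T)$ has $\nn^\perp$-eigenvalue $R$, correctly recovering the curvature $1/R$ and fixing the sign in the general case.
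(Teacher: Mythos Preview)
Your argument is correct and conceptually cleaner than the paper's. You recognize that, by Theorem~\ref{symmetric}, the map $\nn\mapsto\xx(\nn)$ is the inverse of the Gauss map, whence its differential $C(\nn)|_{\nn^\perp}$ is the inverse of the shape operator; the positive semidefiniteness and rank claim via Cauchy--Schwarz are exactly as in the paper's \eqref{Cgeq0}--\eqref{Cizero}. The paper instead works in local coordinates $\phi$ on $\IS^{N-1}$, computes the first fundamental form $G=J_{\IS^{N-1}}^T C^2 J_{\IS^{N-1}}$ and the second fundamental form $L=J_{\IS^{N-1}}^T C J_{\IS^{N-1}}$ (the latter by differentiating $C(\nn)\nn=0$ to handle $\nn^T\partial^2\xx/\partial\phi_k\partial\phi_j$), and then reads off $\kappa_i=\lambda_i(G^{-1}L)=\lambda_i(\tilde C^{-1})$ after passing to the reduced $(N-1)\times(N-1)$ matrix $\tilde C=M^TCM$. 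Your route avoids the coordinate machinery and the $\tilde C$ reduction entirely; the paper's route, while longer, produces as a by-product the explicit formulas \eqref{Gmink}, \eqref{Laskhvcchfa} and the relation $\det G=(\det\tilde C)^2\det G_{\IS^{N-1}}$, which are then used for the boundary-integration formula \eqref{intM} and the Steiner-type volume computations in Section~\ref{s-steiner}.
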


Indeed, we show that $C(\Sigma,\nn)\nn=\bz$ and that the self-adjoint operator $C(\Sigma,\nn)$ is positive-definite on the hyperplane orthogonal to $\nn$. In fact, $C(\Sigma,\nn)$ is the Hessian matrix of the support function of the Minkowski sum $E_1+\cdots +E_m$.

We use the theory of general convex bodies in our proofs of Theorems \ref{symmetric} and \ref{curvatures}. In our derivation of Theorem~\ref{symmetric}, we give a formula for the parametrization of the boundary of the Minkowski sum of convex bodies (Theorem \ref{sum}). We also provide a  reverse isometric inequality for convex bodies (Theorem~\ref{reverse}).

\section{Parametric equation for the Minkowski sum of strictly convex bodies}

We begin by reviewing some basics about convex bodies.  An important concept for the study of Minkowski sums of convex sets is the support function \cite{Gruber,Schneider}:
\begin{definition} The support function $h_K$ of a convex body $K$ is given by
$$h_K(\vv)=\sup_{\xx\in K} \xx\cdot \vv,\quad \vv\in\IR^N.$$ \end{definition} Some elementary properties of the support function are:
\begin{enumerate}
\item[i)] $h_{K_1+K_2}=h_{K_1}+h_{K_2}$,
\item[ii)] $h_K(\vv+\ww) \le h_K(\vv)+h_K(\ww)$,
\item[iii)] $\xx\in K \iff \xx\cdot\vv <  h_K(\vv) \ \forall \vv\in\IR^N$,
\item[iv)] $K_1 \subseteq  K_2 \iff h_{K_1}(\vv) \leq h_{K_2}(\vv) \ \forall \vv\in\IR^N$,
\item[v)] $h_K(\vv)=\sup_{\|\xx\|\in\partial K} \xx\cdot \vv$.
\end{enumerate}
As a consequence of property (iv), two convex (open) bodies are identical if and only if their support functions are equal.

As an example, for a solid ellipsoid $E=\left\{A\uu:\|\uu\|<1\right\}$, where $A\in\mbox{GL}(N,\IR)$ is positive definite symmetric, we have
\beq h_E(\vv)= \sup_{\|\uu\|<1}(A\uu)\cdot \vv = \sup_{\|\uu\|=1}\uu\cdot (A\vv) =\frac{A\vv}{\|A\vv\|}\cdot (A\vv)=\|A\vv\|\,.\label{hE}\eeq
It follows by property (i) of the support function that for the Minkowski sum of solid ellipsoids given as in Theorem~\ref{symmetric}, we have
\beq\label{hEsum} h_{E_1+\cdots+E_m}(\vv)=\|A_1\vv\|+\cdots +\|A_m\vv\|\,. \eeq

Let $K$ be a convex body.
 By Theorem~4.1 of \cite{Gruber}, for every $\xx\in\d K$, there exists $\nn\in\IS^{N-1}$ such that \beq\label{x}h_K(\nn)=\xx\cdot\nn\,.\eeq
Now suppose that $K$ is strictly convex; i.e.,  for all pairs of points $\xx,\yy\in\d K$, the open line segment $\{t\xx+(1-t)\yy:0<t<1\}$ connecting $\xx$ and $\yy$ is contained in $K$. Then for each $\nn\in\sphere$, there is a unique $\xx\in\d K$ satisfying \eqref{x}. Thus we can define the  {\em normal parametrization}  of the boundary $\d K$ to be the map $ \xx_{\d K}:\IS^{N-1}\to \d K$ given by
\beq\label{xdK} h_K(\nn)=\xx_{\d K}(\nn)\cdot\nn\,, \quad \nn\in\sphere\,.\eeq

Thus for a strictly convex body $K$, the map $ \xx_{\d K}$ is well-defined and surjective, but may not be injective (for example, if $K$ is piecewise smooth with corners).  However, if $\d K$ is $\ccal^1$, then $ \xx_{\d K}$ is injective and the parameter $\nn$ is the outward-pointing unit  normal to $\d K$ at $\xx_{\d K}(\nn)$, since the maximum of $\xx\cdot\nn$ on $\d K$ is attained at the point $\xx_{\d K}(\nn)$.
%(which is the reason for calling  $\xx_{\d K}$ the {\em normal parametrization}).
In this case, $\xx_{\d K}$ is simply the inverse of the Gauss map $${\cal G}_{\d K}:\d K\to\sphere\,.$$
%i.e., ${\cal G}_{\d K}(\xx)$ is the outward pointing normal to $\d K$ at the point $\xx\in\d K$.

\begin{example} Let   $E= \{A\vv:\|\vv\|<1\}$ be a solid ellipsoid,  where $A$ is a positive-definite symmetric $N\times N$ matrix.  As shown in Section~\ref{intro}, the normal parametrization  of $\d E$ is given by equation \eqref{x-from-n}:
\beq \xx_{\d E}(\nn)=\frac{A^2\nn}{\|A\nn\|}\,.\label{x-from-n again}\eeq \end{example}

Without assuming smoothness of $\d K$, we  have the following:

\begin{lemma}{\rm \cite[Lemma~2.2.12]{Schneider}} If $K$ is a strictly convex body, then $ \xx_{\d K}$ is continuous. \end{lemma}

\begin{proof} The lemma is a consequence of the following  fact:
Let $f:X\times Y\to\IR$ be a continuous function where $X$ and $Y$ are Hausdorff spaces and $X$ is compact.  For $y\in Y$, let $f_y(x) = f(x,y)$. Suppose that $f_y$ attains its maximum at a unique point $\psi(y)\in X$ for all $y\in Y$.  Then the map $\psi:Y\to X$ is continuous.

To prove this fact, let $y_0\in Y$ and let $U\subset X$ be an arbitrary neighborhood of $x_0 := \psi(y_0)$. Then $\sup_{X\smallsetminus U}f_{y_0}<f_{y_0}(x_0)$ by compactness of $X$ and uniqueness of the maximum point. Therefore, there is a neighborhood $V\subset Y$ of $y_0$ such that $\sup_{X\smallsetminus U}f_{y}<f_{y}(x_0)$ for all $y\in V$ and hence $\psi(y)\in U$ for all $y\in V$, verifying that $\psi$ is continuous at $y_0$.

We apply this  with $f:K\times\sphere\to \IR$ given by $f(\xx,\nn)=\xx\cdot\nn$.  Then $\psi(\nn)=\xx_{\d K}(\nn)$ is continuous.\end{proof}

\begin{theorem} \label{sum} Let $K_1,\dots,K_m$ be strictly convex bodies in $\IR^N$. Then the normal parametrization of the boundary of the Minkowski sum $K_1+\cdots+ K_m$ is given by
\beq\label{sumeq} \xx_{\d(K_1+\cdots+K_m)}(\nn)=\xx_{\d K_1}(\nn)+\cdots+\xx_{\d K_m}(\nn)\,,\quad \nn\in\sphere\,.\eeq\end{theorem}

\begin{proof} It suffices to consider the case $m=2$. For $\nn\in\sphere$, we have \begin{multline*}\xx_{\d(K_1+K_2)}(\nn)\cdot \nn=\sup_{\xx\in K_1+K_2}\xx\cdot \nn =\sup_{\xx_1\in K_1}\xx_1\cdot \nn +\sup_{\xx_2\in K_2}\xx_2\cdot \nn\\ = \xx_{\d K_1}(\nn)\cdot \nn+\xx_{\d K_2}(\nn)\cdot \nn= \left[\xx_{\d K_1}(\nn)+\xx_{\d K_2}(\nn)\right]\cdot \nn\,.\end{multline*}
The conclusion then follows from uniqueness of $\xx_{\d(K_1+K_2)}(\nn)$.\end{proof}

A special case of Theorem \ref{sum}  is the formula in \cite{ruan2020} for $\xx_{\d(K_1+(-K_2))}(\nn)$ where $K_1,K_2$ have smooth boundaries.

\begin{proof}[Proof of Theorem \ref{symmetric}] The result is an immediate consequence of Theorem \ref{sum} with $K_j=E_j$ and equation \eqref{x-from-n again}. \end{proof}

We give an alternative derivation of Theorem \ref{symmetric} in the following section.

\subsection{Direct Proof of Theorem \ref{symmetric}}\label{proof}
The result holds for $m=1$ by \eqref{x-from-n again}.
Using induction, we let $m\ge 1$ and assume that the formula in (\ref{new-mult}) holds for $\Sigma=E_1+\cdots+E_m$. As in the proof of
(\ref{old-res}), we shall apply $A_{m+1}^{-1}$ to $\Sigma+E_{m+1}$  and compute a normal offset.

The fact that ${\bf n}$ is the outward-pointing unit normal to ${\bf x}_{\d(E_1+\cdots+E_m)}({\bf n})$   follows by an elementary
computation. Specifically, if ${\bf n} = {\bf n}(\phi)$ where $\phi = (\phi_1,...,\phi_{N-1})$ are the spherical angles
parametrizing the unit sphere (or are any local coordinates on the unit sphere), then a basis for the tangent hyperplane at the point ${\bf x}_{\d(E_1+\cdots+E_m)}({\bf n})$  on this surface
will be the  $N-1$ tangent vectors $\partial {\bf x}_{\d(E_1+\cdots+E_m)}({\bf n}(\phi_1,...,\phi_{N-1}))/\partial \phi_j$. These tangent vectors will be a sum over $i$ of
\begin{equation}
\frac{\partial}{\partial \phi_j} \left(\frac{A_{i}^{2} {\bf n}}{\|A_{i} {\bf n}\|}\right) =
C(E_i,\nn) \frac{\partial {\bf n}}{\partial \phi_j}
\label{deriv-phi-j}
\end{equation}
where
\begin{equation}
C(E_i,\nn) \, :=\, \frac{A_{i}^{2}}{\|A_{i} {\bf n}\|} \,-\, \frac{A_{i}^{2} {\bf n} {\bf n}^T A_{i}^{2}}{\|A_{i} {\bf n}\|^3} \,=\, [C(E_i,\nn)]^T\,.
\label{C-def}
\end{equation}
We note that for $\vv\in\IR^N$, we have \beq \vv^TC(E_i,\nn)\vv\, =\,\frac{\|A_i\nn\|^2\|A_i\vv\|^2 - [(A_i\vv)^T (A_i\nn)]^2}{ \|A_1\nn\|^3}\,\ge\,0\,,\label{Cgeq0}\eeq with equality if and only if $\vv=c\nn$.
It follows that\begin{equation}
[C(E_i,\nn)] {\bf n} \,=\, {\bf 0}\,.
\label{Cizero}\end{equation}
Thus
\begin{equation*}
{\bf n}^T \frac{\partial}{\partial \phi_j} \left(\frac{A_{i}^{2} {\bf n}}{\|A_{i} {\bf n}\|}\right) \,=\,\nn^TC(E_i,\nn)\frac{\partial {\bf n}}{\partial \phi_j}\,=\, {\bf 0}\,,
\label{}
\end{equation*}
and consequently
\begin{equation}
{\bf n}^T \frac{\partial {\bf x}_{\d(E_1+\cdots+E_m)}({\bf n}(\phi_1,...,\phi_{N-1}))}{\partial \phi_j} = 0\,.
\label{nnndndn}
\end{equation}
Hence, ${\bf n}$ is the normal to ${\bf x}_{\d(E_1+\cdots+E_m)}({\bf n})$.

It follows that $A_{m+1} \nn$ is normal to  $A_{m+1}^{-1} {\bf x}_{\d(E_1+\cdots+E_m)}({\bf n})$.
Therefore, the Minkowski sum of $A_{m+1}^{-1} {\bf x}_{\d(E_1+\cdots+E_m)}({\bf n})$ with the unit ball $\mathbb{B}^N= A_{m+1}^{-1}\cdot E_{m+1}$ has boundary given parametrically as
$$ {\bf x}_{\d A_{m+1}^{-1}(E_1+\cdots+E_{m+1})}({\bf n}) \,=\
A_{m+1}^{-1} {\bf x}_{\d(E_1+\cdots+E_m)}({\bf n}) + \frac{A_{m+1} {\bf n}}{\|A_{m+1} {\bf n}\|}\,. $$
The boundary of the Minkowski sum of $m+1$ ellipsoids is then given parametrically by
$$ {\bf x}_{\d(E_1+  \cdots +  E_{m+1})}({\bf n}) \,=\ A_{m+1}
{\bf x}_{\d A_{m+1}^{-1}(E_1+  \cdots + \ E_{m+1})}({\bf n})  \,=\, \sum_{i=1}^{m+1} \frac{A_{i}^{2} {\bf n}}{\|A_i {\bf n}\|} \,,$$ which is exactly  the formula in (\ref{new-mult}) with $m \mapsto m+1$.\qed

\section{Isoperimetric inequalities}\label{section-reverse}
The classical isoperimetric inequality gives a lower bound for the volume of the boundary of a bounded body $X\subset \IR^N$:
\beq \vol_{N-1}(\d X)\ge N \,\vol_{N}(X)^{\frac{N-1}{N}}\vol_N(\IB^N)^{\frac{1}{N}}\,.\label{iso}\eeq
Here $\vol_N$ denotes Lebesgue measure in $\IR^N$, and \beq\label{voldef}\vol_{N-1}(S) :=\liminf_{\epsilon\to 0}  \frac{\vol_N\{\xx\in\IR^N:\dist(\xx,S)<\epsilon\}}{2\epsilon}\;.\eeq (This general isometric inequality is given in Federer \cite[3.2.43]{Federer}.) If $\d X$ is piecewise smooth, then $\vol_{N-1}(\d X)$ is the usual $(N-1)$-dimensional volume.  In this case, \eqref{iso} can be reduced to the case where $X$ is convex, where the result is a consequence of the Brunn-Minkowski inequality (see \cite{gardner, Schneider}). More generally, if  $\d X$ is ($N-1$)-rectifiable, then $\vol_{N-1}(\d X)= {\mathcal H}^{N-1}(X)$, the Hausdorff ($N-1$)-measure \cite[3.2.39]{Federer}. Now let $r(X)$ and $R(X)$ denote the inradius and circumradius, respectively, of $X$.  Noting that
$$\left[\frac{\vol(\IB^N)}{\vol(X)}\right]^{1/N} \ge \left[\frac{\vol(\IB^N)}{\vol\left(R(X)\IB^N\right)}\right]^{1/N}=\frac 1{R(X)}\,,$$
the isoperimetric inequality \eqref{iso} immediately implies that
\beq\label{iso1}  \vol_{N-1}(\d X)\ge \frac N{R(X)} \,\vol_{N}(X)\,.\eeq

A sharper form of \eqref{iso1} is given by the following:

\begin{theorem}\label{iso1+t} Let $E =  \{A\vv+\yy:\vv\in\IB^N\}$ be the minimal volume outer ellipsoid (MVOE) of a bounded body $X$ in $\IR^N$. Then
\beq  \label{iso1+}  \vol_{N-1}(\d X)\ge \frac N{(\det A)^{1/N}} \,\vol_{N}(X)\,.\eeq \end{theorem}
\begin{proof} The bound \eqref{iso1+} follows immediately from \eqref{iso} and
$$\left[\frac{\vol(\IB^N)}{\vol(X)}\right]^{1/N} \ge \left[\frac {\vol(\IB^N)} {\vol(E)} \right]^{1/N}
= (\det A)^{-1/N}\,.$$
\end{proof}
Theorem \ref{iso1+t} is used in Section \ref{upper} to give an upper bound on the volume of the Minkowski sum of ellipsoids (Theorem \ref{upperboundt}).

Some reverse isometric inequalities for convex bodies based on affine transformations are given in \cite{Ball91} and \cite{grinberg}. We give here a  reverse isometric inequality analogous to \eqref{iso1} and \eqref{iso1+}.

\begin{theorem}\label{reverse}  Let $K$ be a convex body. Then  $\vol_{N-1}(\d K)= {\mathcal H}^{N-1}(K)$ and
\beq\label{iso2}  \vol_{N-1}(\d K)\le \frac N{r(K)} \,\vol_{N}(K)\,.\eeq
If $K$ is strictly convex or if $\d K$ is $\ccal^1$, then \eqref{iso2} is an equality if and only if $K$ is a ball.\end{theorem}
\begin{proof} By  a translation, we  assume throughout this proof that $r(K)\IB^N\subseteq K$.

We first consider the case where $K$ has a smooth boundary.  By the divergence theorem,
\begin{multline} \vol_N(K) = \int_K1\,d\vol_N = \frac 1N \int_K \nabla \cdot \xx\, d\vol_N(\xx)\\ = \frac 1N\int_{\d K} \xx\cdot  {\mathcal G}_{\d K}(\xx)\,d\vol_{N-1}(\xx)\,.\label{divergence}\end{multline}
Writing $\nn={\mathcal G}_{\d K}(\xx)$, for $\xx\in\d K$, we have
\beq\label{xu} \xx\cdot  {\mathcal G}_{\d K}(\xx)=\xx \cdot \nn= h_K(\nn)\ge h_{r(K)\IB^N}(\nn)=r(K)\,.\eeq
Therefore by \eqref{divergence},
\beq\vol_N(K) \ge \frac 1N\int_{\d K} r(K)\,d\vol_{N-1}(\xx) = \frac {r(K)}{N}\vol_{N-1}(\xx)\,,\label{=iso2}\eeq verifying \eqref{iso2} for the case where $\d K$ is smooth.

Now suppose $K$ is not a ball and $\d K$ is $\ccal^1$. We must show strict inequality in \eqref{iso2}. Since $K\supsetneqq r(K)\IB^N$, there exists  $\xx_0\in\d K$ such that $$\|\xx_0\|=\sup_{\xx\in\d K}\|\xx\|>r(K)\,.$$ Therefore $\nabla (\|\xx\|^2) =2\xx$ is orthogonal to $\d K$ at $\xx_0$, and hence ${\mathcal G}_{\d K}(\xx_0)=\|\xx_0\|^{-1}\xx_0$. Thus, $\xx_0\cdot{\mathcal G}_{\d K}(\xx_0)=\|x_0\|>r(K)$ and hence strict equality holds in \eqref{=iso2}. This completes the argument that for smooth convex bodies $K$, equality in \eqref{iso2} implies $K$ is a ball. The converse is trivial.

Now let $K$  be a general convex body. By \cite[3.2.35]{Federer}, $\d K$ is ($N-1$)-rectifiable and hence $\vol_{N-1}(\d K)= {\mathcal H}^{N-1}(\d K)$ \cite[3.2.39]{Federer}.  By \cite[Theorem~2.2.6]{Schneider}, $\bar K$ is the intersection of half spaces, and therefore is the intersection of a decreasing sequence of convex polytopes $\{K_i\}$. Since $\vol_{N-1}(K_i)\le \vol_{N-1}(K_1)<\infty$, it follows that the gradient (in the distribution sense) of the characteristic function of $K$  is a measure (i.e., $K$ has {\it finite perimeter\/}; see \cite[p.~9]{Fleming}), and hence by a theorem of  De Georgi \cite{DeG} (see also \cite[4.5.6]{Federer}),  the divergence theorem \eqref{divergence} holds for $K$ with ${\mathcal G}_{\d K}(\xx)$ given by \beq\|{\mathcal G}_{\d K}(\xx)\|=1 \quad\mbox{and}\quad \xx\cdot {\mathcal G}_{\d K}(\xx)=h_K({\mathcal G}_{\d K}(\xx))\,, \label{aenormal}\eeq  for ${\mathcal H}^{N-1}$-a.a. $\xx\in \d K$.
(By \cite[4.5.6]{Federer} or \cite[Theorem~2.2.5]{Schneider}, there exists a unique ${\mathcal G}_{\d K}(\xx)$ satisfying \eqref{aenormal} for ${\mathcal H}^{N-1}$-a.a. $\xx\in\d K$.) Then \eqref{iso2} follows from the divergence theorem as before.

Finally, we consider the case where $K$ is strictly convex and not necessarily smooth. Suppose that equation \eqref{iso2} is an equality. Then \eqref{xu} is an equality for  ${\mathcal H}^{N-1}$-a.a. $\xx\in\d K$. I.e., there exists a  set $Z\subset \d K$ such that ${\mathcal H}^{N-1}(\d K\smallsetminus Z)=0$ and
\beq\xx\cdot  {\mathcal G}_{\d K}(\xx)=  r(K)\quad\forall\;\xx\in Z\,.\label{byxu}\eeq

Now fix a point $\xx_0\in Z$.\ and let $P$ be the support hyperplane for $K$ through $\xx_0$; i.e., $\xx_0\in P$ and ${\mathcal G}_{\d K}(\xx_0)$ is orthogonal to $P$. Then by \eqref{byxu}, $K$ is contained in the half-space  $\{\vv\in\IR^N: \vv\cdot{\mathcal G}_{\d K}(\xx_0)<r(K)\}$, which is bounded by $P$; in particular $K\cap P=\emptyset$.
Let $$\yy=r(K){\mathcal G}_{\d K}(\xx_0) \in P\cap  r(K)\sphere\subseteq P\cap \bar K\subset \d K\,.$$ We claim that $\yy=\xx_0$.  Indeed, suppose on the contrary that $\yy\neq\xx_0$. Then by the  strict convexity of $\bar K$, the midpoint $\zz= \frac12 \yy+\frac12\xx_0\in P\cap K$, contradicting the fact that $K\cap P=\emptyset$. Therefore $\xx_0=\yy$ and thus $\|\xx_0\|=\|\yy\|=r(K)$; i.e. $\xx_0\in r(K)\sphere$.

Since $\xx_0\in Z$ is arbitrary,  we have shown that  $Z\subseteq r(K)\sphere$ and therefore $\bar Z\subseteq r(K)\sphere$. Since ${\mathcal H}^{N-1}(\d K\smallsetminus \bar Z)=0$, and all nonempty, relatively open subsets of $\d K$ have positive ${\mathcal H}^{N-1}$-measure zero, it follows that $\bar Z=\d K\subseteq  r(K)\sphere$ and thus $K\subseteq r(K){ \IB^N}$. Therefore $K= r(K){ \IB^N}$.\end{proof}

The inequality \eqref{iso2} for  planar convex bodies ($N=2$) is stated in \cite[p.~90]{santalo}.
Equality in \eqref{iso2} can occur  for non-strictly-convex bodies, for example, when $K$ is an arbitrary triangle in $\IR^2$, or when $K$ is an $N$-cube in $\IR^N$. In general, if equality holds in \eqref{iso2} for a convex body $K\supseteq r(K)\IB^N$, then for every point $\xx\in\d K\smallsetminus  r(K)\sphere$, there exists a point $\yy\in  r(K)\sphere$ such that the straight line segment from $\xx$ to $\yy$ is contained in $\d K$ (and is tangent to $ r(K)\sphere$ at $\yy$).  To give another  example, let $Q$ be a cone in $\IR^3$ tangent to $\IS^2$ along a horizontal circle $C\subset \IS^2$ (with the vertex of $Q$ below the sphere); then equality holds in \eqref{iso2}  for the `ice cream cone' shaped body $K$ bounded by the portion of $Q$ below $C$ and the portion of  $\IS^2$ above $C$.

An alternative proof of \eqref{iso2} without geometric measure theory is as follows: As mentioned above, we can choose a decreasing sequence of convex polytopes $K_i$, such that $\bigcap K_i=\bar K$. We  note that the divergence theorem \eqref{divergence} holds for convex polytopes (for an elementary proof, see \cite[p.~10]{Stolzenberg} or \cite[Ch.~III]{W}), and \eqref{xu} holds for all smooth points $\xx\in\d K$. It then follows as before that  \eqref{iso2} holds for the polytopes $K_i$. Since $K_i\searrow K$ as $i\to\infty$, we have $\vol_N(K_i)\to \vol_N(K)$. Furthermore, for convex bodies $D\subset\IR^N$,
\beq\vol_{N-1}(\d D) = \frac {N\,\vol(\IB^N)}{\vol(\IB^{N-1})} \int_{\sphere}\vol_{N-1}[\Pi_{\uu^\perp}(D)]\,d\uu\,,\label{quer1}\eeq where $\Pi_{\uu^\perp}$ is the projection onto the hyperplane  in $\IR^N$ orthogonal to $\uu$, and $d\uu$ is the invariant probability measure on $\sphere$ (see \cite{gardner,Schneider}). It  follows from \eqref{quer1} and the Lebesgue dominated convergence theorem that $\vol_{N-1}(\d K_i)\to \vol_{N-1}(\d K)$. Since $r(K)\le r(K_i)$, we have
$$\vol_{N-1}(\d K_i)\le \frac N{r(K_i)} \,\vol_{N}(K_i) \le \frac N{r(K)} \,\vol_{N}(K_i)\,.$$
Therefore,
$$\vol_{N-1}(\d K) = \lim_{i\to\infty}\vol_{N-1}(\d K_i)\le \frac N{r(K)} \lim_{i\to\infty} \,\vol_{N}(K_i)=\frac N{r(K)} \,\vol_{N}(K).$$ \qed

\section{Differential Geometry of Strictly Convex Bodies: Proof of Theorem~\ref{curvatures}}

In the following, we let $K$ be a  convex body such that its boundary $\d K$ is $\ccal^2$ and has strictly positive principal curvatures. This condition implies that $K$ is strictly convex (but strict convexity does not imply positive principal curvatures).
We define $\hat\xx_{\d K}:\IR^N\to \d K$  by
\beq\label{xdK+} \hat\xx_{\d K}(\vv) := \xx_{\d K}\left(\frac{\vv}{\|\vv\|}\right) = {\cal G}_{\d K}^{-1} \left(\frac\vv{\|\vv\|}\right),\eeq where we recall that ${\cal G}_{\d K}$ denotes the Gauss map of $\d K$. We note that $\xx_{\d K}=\hat\xx_{\d K}|_{\sphere}$.

\begin{lemma} If $\d K$ is $\ccal^2$ and has positive principal curvatures, then $\hat\xx_{\d K}$ is $\ccal^1$ on $\IR^N\smallsetminus \{\bz\}$.\label{inverseGauss}\end{lemma}

\begin{proof} If $\d K$  is $\ccal^2$, then the Gauss map ${\cal G}_{\d K}$ is $\ccal^1$.  Thus, it suffices to show that the Jacobian of  ${\cal G}_{\d K}$ is invertible at each  point of $\d K$ and hence its inverse $\xx_{\d K}:\sphere\to\d K$ is $\ccal^1$. Let $\xx_0\in\d K$, and choose local coordinates $t_1,t_2,\dots,t_{N-1}$  on $\d K$ in a neighborhood of $\xx_0$ .  Let $\phi_1,\dots,\phi_{N-1}$ be local coordinates on $\sphere$ in a neighborhood of ${\cal G}_{\d K}(\xx_0)$. We then can write
$${\cal G}_{\d K}(\xx(t))=\nn(\phi_1(t),\dots,\phi_{N-1}(t))\,,\quad t=(t_1,\dots,t_{N-1})\,,$$ where $\xx(t)$ denotes the point in $\d K$ with coordinates  $t_1,t_2,\dots,t_{N-1}$, and similarly $\nn(\phi_1,\dots,\phi_{N-1})$ denotes the point in $\sphere$ with coordinates $\phi_1,\dots,\phi_{N-1}$ .

The Second Fundamental Form $[l^t_{ij}]$ ($1\le i,j\le N-1$) for $\d K$ with respect to the coordinates $(t_1,\dots,t_{N-1})$ is given by
\beq\label{Ltij} l^t_{ij}=  -\nn\cdot\frac{\partial^2 {\bf x}}{\partial t_i \partial t_j}= \frac{\d \nn}{\d t_i}\cdot\frac{\partial {\bf x}}{ \partial t_j} =\sum_{k=1}^{N-1}\frac{\d\phi_k}{\d t_i}\,\frac{\d \nn}{\d \phi_k}\cdot\frac{\partial \xx}{ \partial t_j}=\sum_{k=1}^{N-1} J_{ik}\,W_{kj}\,,\eeq where  $(J_{ki})= (\frac{\d\phi_k}{\d t_i})$ is the Jacobian of ${\cal G}_{\d K}$ with respect to the coordinates $t_i$ and $\phi_k$, and $W_{kj}=\frac{\d \nn}{\d \phi_k}\cdot\frac{\partial \xx}{ \partial t_j}$.  If the principal curvatures of $\d K$ are non-zero, then $(l^t_{ij})$ is non-singular, and hence  $(J_{ik})$ is non-singular.\end{proof}

 We now use the notation   $D_jf(v_1,\dots,v_N)=\d f/\d v_j$.
\begin{definition} For $K$ as in Lemma \ref{inverseGauss} and $\nn\in\sphere$, we  define the $N\times N$ {\em convexity matrix}   $C(K,\nn)$ by\beq\label{Cdef} C(K,\nn)_{ij}=D_j x_i(\nn)\,,\quad 1\le i,j\le N\eeq
where we write $\hat\xx_{\d K}(\nn)=[x_1(\nn),\dots,x_n(\nn)]^T$. I.e., $C(K,\nn)$ is the Jacobian of $\hat\xx_{\d K}$ at $\nn$.
\end{definition}

The following result is given in Schneider \cite[p.~115]{Schneider}:
\begin{lemma} For $K$ as in Lemma \ref{inverseGauss}, the support function $h_K$ is $\ccal^2$ and \beq\hat\xx_{\d K}(\vv) = \nabla h_K(\vv)\,,\quad\forall\ \vv\in\IR^N\smallsetminus\{\bz\}\,,\label{gradient}\eeq and therefore \beq\label{Hessian} C(K,\nn)_{ij}=D_iD_j h_K(\nn)\,.\eeq \end{lemma}
\begin{proof}   To verify \eqref{gradient}, we note that for $\vv\in\IR^N\smallsetminus\{\bz\}$,
$$h_K(\vv)=\sup _{\yy\in K}(\vv\cdot\yy)=\vv\cdot\xx_{\d K}\left(\frac \vv{\|\vv\|}\right) = \vv\cdot\hat\xx_{\d K}(\vv)\,.$$ Since $\hat\xx_{\d K}$ is $\ccal^1$ by Lemma~\ref{inverseGauss}, $h_K$ is also  $\ccal^1$. Using the above notation, we have $$D_jh_K(\vv)= x_j(\vv)+\vv\cdot D_j\hat\xx_{\d K}(\vv)\,.$$
Since the tangent hyperplane to $\d K$ at $\hat\xx_{\d K}(\vv)$ is orthogonal to $\vv$, we conclude that  $D_jh_K(\vv)= x_j(\vv)$, which yields \eqref{gradient}.  Since $D_jh_K=x_j$ is $\ccal^1$, it follows that $h_K$ is $\ccal^2$. Equation \eqref{Hessian} is an immediate consequence of \eqref{Cdef}--\eqref{gradient}.\end{proof}

To prove Theorem \ref{curvatures}, we shall use the following result of Blaschke (in dimension 3) and Firey \cite{Firey} (see also   \cite[Cor.~2.5.2]{Schneider}):

\begin{theorem}\label{Kcurvatures} Let $K\subset\IR^N$ be a convex body such that  its boundary $\d K$ is $\ccal^2$ and has positive sectional curvatures. Then for all $\nn\in\sphere$,
\begin{enumerate} \item[i)] $C(K,\nn)$ is symmetric, \item[ii)]$C(K,\nn)\nn=\bz$, \item[iii)] $C(K,\nn)$ is positive-definite on $\nn^\perp$, \item[iv)]  the principal curvatures of the Minkowski sum boundary $\d K$ at ${\bf x}_{\d K}({\bf n})$ are the reciprocals of the  non-zero eigenvalues of $C(K,\nn)$.\end{enumerate}\end{theorem}

\begin{proof} Conclusion (i) follows from \eqref{Hessian}. To verify (ii), we note that \eqref{Cdef} implies that $$C(K,\nn)\vv= \sum_{j=1}^Nv_jD_j\hat\xx_{\d K}(\nn)= \frac d{ds}\hat\xx_{\d K}(\nn+s\vv)|_{
s=0}\,.$$
Since $\hat\xx_{\d K}(\nn+s\nn)=\hat\xx_{\d K}(\nn)$, it follows that $C(K,\nn)\nn=\bz$.

Let $\phi_1,\dots,\phi_{N-1}$ denote local coordinates of points $\nn(\phi_1,\dots,\phi_{N-1})\in\sphere$. Define $$\xx(\phi_1,\dots,\phi_{N-1}) :=\xx_{\d K}(\nn(\phi_1,\dots,\phi_{N-1}))\in\d K\,,$$ so that $\phi_1,\dots,\phi_{N-1}$ are also local coordinates on $\d K$.  We then have
\beq\label{Cformula} \frac{\d x_i}{\d \phi_k} = \sum_{j=1}^N\frac {\d x_i}{\d n_j }\frac{\d n_j}{\d \phi_k}  = \sum_{j=1}^N C(K,\nn)_{ij}\frac{\d n_j}{\d \phi_k}\,.\eeq

The elements of the metric tensor  and the second fundamental form for $\d K$ are given, respectively, by
\begin{eqnarray}
g_{ij} &=&\frac{\partial {\bf x}}{\partial \phi_i} \cdot \frac{\partial {\bf x}}{\partial \phi_j}\,,\notag\\
l_{ij} &=& -\nn\cdot\frac{\partial^2 {\bf x}}{\partial \phi_i \partial \phi_j} \ =\ \frac{\partial {\bf n}}{\partial \phi_i} \cdot \frac{\partial {\bf x}}{\partial \phi_j} \,.
\label{2ndfundform}
\end{eqnarray} (The sign of $l_{ij}$ is chosen so that the principal curvatures of $\d K$ are positive.)
Then with $G=[g_{ij}]$ and $L=[l_{ij}]$, the $N-1$ principal curvatures are obtained as the eigenvalues of the matrix $G^{-1} L$.

We now write $C=C(K,\nn)$. By \eqref{Cformula},
\begin{equation}
\frac{\partial {\bf x}}{\partial \phi_i} \,=\, C \frac{\partial {\bf n}}{\partial \phi_i} \,,
\label{recall}
\end{equation} and so
\beq
g_{ij} = \frac{\partial {\bf n}^T}{\partial \phi_i} C^TC \frac{\partial {\bf n}}{\partial \phi_j} = {\frac{\partial {\bf n}^T}{\partial \phi_i}   C^2 \frac{\partial {\bf n}}{\partial \phi_j}}\,,
\label{gijmink}
\end{equation} where we have used the symmetry of $C$.
The  metric tensor $G=(g_{ij})$ of $\d K$ can then be written as
\begin{equation}
G = J_{\mathbb{S}^{N-1}}^T   C^2 J_{\mathbb{S}^{N-1}}
\label{Gmink}
\end{equation}
where
\begin{equation}
J_{\mathbb{S}^{N-1}} \,=\,\left[\frac{\partial {\bf n}}{\partial \phi_1}\ \frac{\partial {\bf n}}{\partial \phi_2}\ ...\ \frac{\partial {\bf n}}{\partial \phi_{N-1}}\right]
\label{dd-1jac}
\end{equation}
is the Jacobian for the sphere. The metric tensor of the sphere is
\begin{equation}
G_{\mathbb{S}^{N-1}} \,=\, J_{\mathbb{S}^{N-1}}^T J_{\mathbb{S}^{N-1}} \,.
\label{Gsphere}
\end{equation}
Note that since $\nn\cdot ({\partial {\bf n}}/{\partial \phi_i})=0$, we have
\begin{equation} J^T_{\mathbb{S}^{N-1}}{\bf n} \,=\, {\bf 0}\,.\label{northjab}\end{equation}
In the above equations and in the following, all matrices are functions of ${\bf n}$.

Let $\{\vv^1(\nn),\dots,\vv^{N-1}(\nn)\}$ be a (moving) orthonormal basis  for the tangent hyperplane to $\d K$  at $\xx_{\d K}(\nn)$; this hyperplane is orthogonal to the null direction $\nn$ of $C$.
%Recalling \eqref{Cnzero}, the shared null direction $\nn$} for $C$ and $J^T_{\mathbb{S}^{N-1}}$ can be removed by expressing these operators as matrices in terms of a (moving) orthonormal basis $\{\vv^1(\nn),\dots,\vv^{N-1}(\nn)\}$ for the tangent hyperplane to the boundary hypersurface at $\xx(\nn)$.
Since $C$ is self-adjoint (symmetric), the range of $C$ is also spanned by the $\vv^i(\nn)$. Thus we can express $C$  in terms of these basis vectors.  Explicitly,  we define the  $N\times (N-1)$ matrix
\begin{equation}
M \, :=\,\left[\vv^1(\nn)\; \vv^2(\nn)\;\dots\; \vv^{N-1}(\nn)\right],
\label{mdef-nndkfvv}
\end{equation}
which has the properties
\beq\label{MM} M^T M \,=\, \mathbb{I}_{N-1}\,,\,\,\, MM^T+\nn\nn^T=\II_N\,, \,\,\,{\rm and}\,\,\,
{\bf n}^T M \,=\,0\,. \eeq
Then
\beq\label{MCM} \tilde  C(K,\nn) := M^T \,C(K,\nn)\, M  \eeq is the matrix of the operator $C(K,\nn)$ on the tangent hyperplane at $\xx_{\d K}(\nn)$ with respect to the orthonormal basis $\{\vv^1(\nn),\dots,\vv^{N-1}(\nn)\}$. We similarly write $\tilde C= \tilde  C(K,\nn)$.

Moreover, we can write
\beq \tilde{J}_{\mathbb{S}^{N-1}} \,=\,
M^T J_{\mathbb{S}^{N-1}} \,. \label{MTJ}\eeq It then follows from \eqref{northjab} and \eqref{MM} that
\beq \label{MJtilde} {J}_{\mathbb{S}^{N-1}}=(MM^T+\nn\nn^T) {J}_{\mathbb{S}^{N-1}}  =M\tilde {J}_{\mathbb{S}^{N-1}}\,,\eeq
and therefore by \eqref{Gsphere} and \eqref{MM}, \beq\label{Gsphere1}
G_{\mathbb{S}^{N-1}} \,=\, \tilde{J}_{\mathbb{S}^{N-1}}^T \tilde{J}_{\mathbb{S}^{N-1}}\,.\eeq

By (ii) and \eqref{MM}--\eqref{MCM}, $$\tilde C^2=M^TC(MM^T+\nn\nn^T)CM= M^TC^2M\,,$$ and hence
\begin{equation}
G = \tilde{J}_{\mathbb{S}^{N-1}}^T {\tilde C^2}\tilde{J}_{\mathbb{S}^{N-1}}\,.
\label{Gsquare}
\end{equation}

By  \eqref{2ndfundform}--\eqref{recall}, we have \begin{equation}
l_{ij} \,=\, \frac{\partial {\bf n}^T}{\partial \phi_i} C \frac{\partial {\bf n}}{\partial \phi_j} \,,
\label{lij}
\end{equation}
or
\begin{equation}
L = {J}_{\mathbb{S}^{N-1}}^T C {J}_{\mathbb{S}^{N-1}} \,.
\label{Lksnksdskflf}
\end{equation}
As with $G$, this can be expressed in terms of square matrices { by using \eqref{northjab} and \eqref{MM} to obtain}
\begin{equation}
L =  \tilde{J}_{\mathbb{S}^{N-1}}^T \tilde{C} \tilde{J}_{\mathbb{S}^{N-1}} \,.
\label{Laskhvcchfa}
\end{equation} Hence $\tilde C$ is invertible and
\beq G^{-1} L = \left(\tilde{J}_{\mathbb{S}^{N-1}}^{-1}\tilde{C}^{-2}
\tilde{J}_{\mathbb{S}^{N-1}}^{-T}\right)\left(
\tilde{J}_{\mathbb{S}^{N-1}}^T \tilde{C} \tilde{J}_{\mathbb{S}^{N-1}}\right)= \tilde{J}_{\mathbb{S}^{N-1}}^{-1} \tilde{C}^{-1} \tilde{J}_{\mathbb{S}^{N-1}}\,. \eeq
Using the fact that eigenvalues are invariant under similarity transformations, the principal curvatures $\kappa_i=\kappa_i(\xx)$ of $\d K$ can then be computed as
\begin{equation}
{\kappa_i(\xx_{\d K}(\nn)) \,=\, \lambda_i\big(\tilde C(K,\nn)^{-1}\big) \,=\, \frac{1}{\lambda_i(C(K,\nn))} \,,  \quad i=1,\dots,N-1\,,}
\label{kappa2kdsnwdcw}
\end{equation}  where $\lambda_i$ denotes the $i$-th eigenvalue.
The last equality  follows from the fact that the eigenvalues of the $(N-1)\times(N-1)$ matrix $\tilde{C}$ are the same as those for the $N\times N$ matrix $C$, except for the single zero eigenvalue  $\lambda_N(C)=0$ corresponding to the eigenvector $\nn$ by (ii). This verifies (iii)--(iv) and completes the proof of Theorem~\ref{Kcurvatures}.\end{proof}

\begin{remark}
If the $\phi_i$  are the standard spherical angles on $\IS^{N-1}$, then the tangent vectors $\partial\nn/\partial\phi_i$ at $\xx_{\d K}(\nn)$ are orthogonal and   we could choose the orthonormal basis used in equation \eqref{mdef-nndkfvv} to be
\beq\label{vspherical}\vv^i(\nn)=\left\|\frac{\partial {\bf n}}{\partial \phi_i}\right\|^{-1} \frac{\partial {\bf n}}{\partial \phi_i}\;, \quad 1\le j\le N-1.\eeq
With the choice \eqref{vspherical}, $\tilde{J}_{\mathbb{S}^{N-1}}$ and $G_{\mathbb{S}^{N-1}}$ are diagonal matrices: \begin{eqnarray*}\tilde{J}_{\mathbb{S}^{N-1}}&=&\diag\left(\left\|\frac{\partial {\bf n}}{\partial \phi_1}\right\|\,,\dots,\left\|\frac{\partial {\bf n}}{\partial \phi_{N-1}}\right\|\right)\,,\\G_{\mathbb{S}^{N-1}}&=&\diag\left(\left\|\frac{\partial {\bf n}}{\partial \phi_1}\right\|^2\,,\dots,\left\|\frac{\partial {\bf n}}{\partial \phi_{N-1}}\right\|^2\right).\end{eqnarray*}
\end{remark}

\begin{proof}[Proof of Theorem \ref{curvatures}] It follows from Theorem \ref{sum} that
\beq\label{Csum} C({K_1+\cdots+K_m},\nn)= C({K_1},\nn)+\cdots +C({K_m},\nn)\,,\quad \nn\in\sphere\,.\eeq Thus by \eqref{x-from-n again} and \eqref{deriv-phi-j}-\eqref{C-def}, the convexity matrix $C(\Sigma,\nn)$ of $\Sigma$ is given by
\begin{equation}
C(\Sigma,\nn) \,=\, \sum_{i=1}^{m} C({E_i},\nn) \,=\,  \sum_{j=1}^m \left[ \frac{A_{i}^{2}}{\|A_{i} {\bf n}\|} \,-\, \frac{A_{i}^{2} {\bf n} {\bf n}^T A_{i}^{2}}{\|A_{i} {\bf n}\|^3}\right]\,,
\label{C1-def}
\end{equation}
which agrees with \eqref{Cmatrix}. Theorem~\ref{Kcurvatures}  provides the stated properties of $C(\Sigma,\nn)$.
\end{proof}

The $\tilde C$ matrix is  useful for computing integrals over the boundary of ellipsoidal sums.
Invariant integration on the sphere is given  by
\begin{equation}
\int_{\mathbb{S}^{N-1}} f({\bf n}) d\sigma_{N-1}({\bf n}) = \int
f({\bf x}(\phi))\,\left[\det G_{\mathbb{S}^{N-1}}(\phi)\right]^{1/2}\,d\phi_1\cdots d\phi_{N-1}\,,
\label{sphere int}
\end{equation}  where $\sigma_{N-1}$ is volume measure on $\mathbb{S}^{N-1}$.
By \eqref{Gsphere1}--\eqref{Gsquare}, $\det G =(\det\tilde C)^2\det G_{\sphere}$. Integration over $\d K$,
$$\int_{\partial K}f(\xx)\,d\vol_{N-1}(\xx)=
\int f({\bf x}(\phi)) \det G(\phi)^{1/2}\, d\phi_1\cdots d\phi_{N-1}\,,$$ can therefore  be rewritten as
\begin{equation}
\boxed{\,\int_{\partial K}f(\xx)\,d\vol_{N-1}(\xx) = \int_{\mathbb{S}^{N-1}} f({\bf x}_{\d K}({\bf n})) \,  { \det \tilde{C}(K,\nn)}\, d\sigma_{N-1}({\bf n}) \,.}
\label{intM}
\end{equation}
This equation will be used together with Steiner's formula in Section~\ref{s-steiner} to compute the volumes of Minkowski sums of ellipsoids.

\section{Volume Bounds Using Bounding Ellipsoids}\label{s-bounds}

We shall use the notation
\beq E_A  := \{\xx\in\IR^N: \xx^T A^{-2}\xx<1\}\,,\label{impl1}\eeq when $A$ is a symmetric positive-definite $N\times N$ matrix; equivalently, the  boundary of $E_A$ is given implicitly by \eqref{para1} or \eqref{x-from-n}.

Given inner and outer ellipsoidal bounds of the form
$$ E_{A_{\inner}} \,\subseteq\, \sum_i E_i \,\subseteq\, E_{A_{\Outer}}  \,, $$
and noting that $\vol(E_A)=\vol(\mathbb{B}^N)\det A$, where  $\Vol(\mathbb{B}^N)=\frac{\pi^{N/2}}{\Gamma(N/2 +1)}$ is the volume of the unit $N$-ball, we have the obvious volume bounds
\begin{equation}
 \Vol(\mathbb{B}^{N}) \det\left(A_{\inner}\right)  \,\leq\, \Vol\left(\sum_i E_i\right) \,\leq\, \Vol(\mathbb{B}^{N}) \det \left(A_{\Outer}\right).
\label{volbounds}
\end{equation}
In the following, we review some formulas for $A_\inner$ and $A_\Outer$ which we apply to \eqref{volbounds}, and which can be used to obtain further volume estimates in Section~\ref{s-steiner}.

\subsection{Optimal Lower Bounds for Minkowski Sums of Two Ellipsoids}\label{containment}

An ellipsoid can be fit inside the Minkowski sum $E_1+\cdots +E_m$ of the solid ellipsoids $E_j$ of Theorem  \ref{symmetric} by the following argument. Consider a solid ellipsoid, $E_\inner $, defined by  ${\bf x}^T A_\inner ^{-2} {\bf x} < 1$. Recalling \eqref{x-from-n}, we can  parameterize $\partial E_\inner $ by its normal $\nn$  as \beq\label{param} {\bf x}_{\partial E_\inner }({\bf n}) = \frac{A_\inner ^2 {\bf n}}{\|A_\inner  {\bf n}\|} \,, \eeq where $A_\inner $ is symmetric, positive-definite.

{The containment
 condition $E_\inner\subset E_1+\cdots+E_m$ can be written as the  inequality
\begin{equation} \nn^T\,
{\bf x}_{\partial E_\inner }({\bf n}) \leq \nn^T\,{\bf x}_{\partial (E_1  + \cdots + E_m)}({\bf n})\,,  \label{pos-hyp}
\end{equation}  for all $\nn\in\IS^{N-1}$.
By \eqref{new-mult} and \eqref{param}, we then obtain} the general condition
\beq\label{Einner}E_\inner \subseteq E_1+\cdots+ E_m \ \iff\   \|A_\inner \vv\|\le \sum_{j=1}^m\|A_j\vv\|,\ \forall  \vv\in\IR^N\,.\eeq
Hence by the triangle inequality, the matrix
\begin{equation}
 A_{\rm sum}    := \sum_{i=1}^{m} A_i
\label{Ainner}
\end{equation}
satisfies these conditions and hence $E_{A_{\rm sum}}\subseteq E_1+\cdots+E_m$.

When $E_\inner$ is contained in the Minkowski sum  $\Sigma := E_1+\cdots+E_m$, a boundary point $\xx_{\partial E_\inner }(\nn)\in\partial E_\inner $  is also in the boundary of $\Sigma$ if and only if equality holds in (\ref{pos-hyp}); i.e.,
\beq \label{innercontact} \|A_\inner \nn\|= \sum_{j=1}^m\|A_j\nn\|\,.\eeq
 If \eqref{innercontact} holds, then $\nn$ is also the unit normal to $\Sigma$ at $\xx(\nn)$.

For the case $m=2$, the inner ellipsoid $E_{\rm sum} := E_{A_{\rm sum}}$  will contact the boundary of the Minkowski sum at $2N$ (or more) points. Indeed, let $\vv_1,\dots,\vv_N$ be eigenvectors of $A_1^{-1}A_2$ (which is diagonalizable since it is a conjugate of $A_1^{-1/2}A_2A_1^{-1/2}$) with eigenvalues $\lambda_1,\dots,\lambda_N$, respectively. Then $A_2\vv_j=\lambda_jA_1\vv_j$, and hence by formula \eqref{new-mult} of Theorem~\ref{symmetric},
$$\xx_{\partial(E_1+E_2)}(\vv_j)=(1+\lambda_j)\frac{A_1^2 \vv_j}{\|A_1 \vv_j\|}=\frac{A_{\rm sum} ^2 \vv_j}{\|A_{\rm sum}  \vv_j\|}= \xx_{\partial E_{\rm sum} }(\vv_j)\,.$$ Therefore (when $m=2$), $E_{\rm sum} $ contacts the boundary of $E_1+E_2$ at the $2N$ points $\pm\xx_{\partial E_{\rm sum} }(\vv_j)$.
Having these $2N$ contacts, $E_{\rm sum} $ is a good lower bound for the Minkowski sum of two ellipsoids. However, except in special cases where  $E_1$ and $E_2$ have the same principal axes (i.e., $A_1$ commutes with $A_2$), $E_{\rm sum} $ will not coincide with the maximal volume inner ellipsoid described in Theorem~\ref{Jellipsoid} and  Lemma~\ref{Jlemma} below.

For the general case of the Minkowski sum $\Sigma$ of three or more solid  ellipsoids,  $\partial E_{\rm sum} \cap\partial\Sigma=\emptyset$.  Indeed, if none of the eigenvectors  of $A_1^{-1}A_2$ are eigenvectors of $A_1^{-1}A_3$, then $$\|A_{\rm sum} \nn\|< \sum_{j=1}^m\|A_j\nn\|$$ for all vectors $\nn$, and hence   $E_{\rm sum} $ can be dilated and remain inside $\Sigma$; Thus $E_{\rm sum} $ will not have maximal volume.  However, there will be contact points in special cases when the ellipsoids share the same semi-axes. (See Remark \ref{Jremark} below.

To determine if an inner ellipsoid has maximal volume, one can apply  the following result of F. John \cite{John48} (see also \cite{Ball}):

 \begin{theorem} {\rm (John \cite{John48})} Let $K$ be a convex body that is symmetric about $\bz$.  A solid ellipsoid $E=\{A\vv:\|\vv\|<1\}$ contained in  $K$ has maximal volume among all  ellipsoids contained in $K$ if and only if there exist points $\xx_1,\cdots, \xx_k$ ($k\ge N$) in $\partial E \cap\partial K$  and constants $c_1,\dots,c_k$ such that \beq\label{J}\sum_{j=1}^k c_j(\yy^T A^{-1}\xx_j)A^{-1}\xx_j=\yy\,,\eeq for all $\yy\in\IR^N$. Furthermore $E$ is unique.\label{John}\end{theorem}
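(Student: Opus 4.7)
The plan is to reduce to Fritz John's original formulation by changing coordinates $\yy\mapsto A^{-1}\yy$.  Setting $K'\doteq A^{-1}K$ and $\uu_j\doteq A^{-1}\xx_j$, the hypothesis $E\subset K$ with $\xx_j\in\partial E\cap\partial K$ becomes $\IB^N\subset K'$ with $\uu_j\in\partial\IB^N\cap\partial K'$, and identity \eqref{J} translates to $\sum_j c_j\,\uu_j\uu_j^T=\II_N$, with the bound $k\ge N$ automatic from $\mathrm{rank}\,\II_N=N$.  The theorem then reduces to showing that $\IB^N$ is the unique maximum-volume ellipsoid inscribed in $K'$ if and only if such a decomposition of $\II_N$ by contact points with positive weights exists.

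For necessity, I argue by contradiction.  Let $S\doteq\partial\IB^N\cap\partial K'$ (compact), and let $\mathcal{C}$ be the closed convex cone generated by $\{\uu\uu^T:\uu\in S\}$ inside the space of symmetric $N\times N$ matrices.  If $\II_N\notin\mathcal{C}$, Hahn--Banach separation produces a symmetric $H$ with $\uu^T H\uu\le 0$ for every $\uu\in S$ while $\mathrm{tr}(H)>0$.  Replacing $H$ by $H-\epsilon\II_N$ for sufficiently small $\epsilon>0$ gives a strict inequality $\uu^T H\uu<-\delta<0$ on a neighborhood $U\subset\partial\IB^N$ of $S$, with $\mathrm{tr}(H)>0$ preserved.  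Consider the perturbed ellipsoid $E_t\doteq(\II_N+tH)\IB^N$, whose volume equals $\det(\II_N+tH)=1+t\,\mathrm{tr}(H)+O(t^2)$.  Two local estimates show $E_t\subset K'$ for small $t>0$: for $\uu\in U$, $\|(\II_N+tH)\uu\|^2=1+2t\,\uu^T H\uu+O(t^2)<1$, so these boundary points lie inside $\IB^N\subset K'$; for $\uu\in\partial\IB^N\setminus U$, a uniform positive lower bound on $\dist(\uu,\IR^N\setminus K')$ obtained by compactness ensures that a uniformly small perturbation remains in $K'$.  This contradicts the maximality of $\IB^N$.

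For sufficiency, assume $\sum_j c_j\,\uu_j\uu_j^T=\II_N$ with $c_j>0$, and let $E'=T\IB^N\subset K'$ be any inscribed ellipsoid ($T$ nonsingular).  Each $\uu_j$ is the outward unit normal to the common supporting hyperplane of $\IB^N$ and $K'$ at $\uu_j$, so the support function of $K'$ satisfies $h_{K'}(\uu_j)=1$.  Since $E'\subset K'$, $h_{E'}(\uu_j)=\|T^T\uu_j\|\le 1$, i.e., $\uu_j^T TT^T\uu_j\le 1$.  Weighting by $c_j$ and summing,
\begin{equation*}
\mathrm{tr}(TT^T)=\mathrm{tr}\!\left(TT^T\sum_j c_j\,\uu_j\uu_j^T\right)=\sum_j c_j\,\uu_j^T TT^T\uu_j\le\sum_j c_j=\mathrm{tr}(\II_N)=N.
\end{equation*}
The AM--GM inequality applied to the squared singular values of $T$ yields $(\det T)^2=\prod_i\sigma_i^2\le(\mathrm{tr}(TT^T)/N)^N\le 1$, hence $\vol(E')\le\vol(\IB^N)$.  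Equality forces $\sigma_i=1$ for all $i$, so $TT^T=\II_N$ and $E'=\IB^N$, giving maximality and uniqueness simultaneously.

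The main obstacle is the perturbation step in the necessity direction: the separator $H$ must be shifted by $-\epsilon\II_N$ to upgrade a non-strict inequality to a strict one on the entire compact contact set without destroying the positive trace, and the resulting enlarged ellipsoid must be shown via a two-region compactness argument to still fit inside $K'$.  The remaining ingredients---identifying $\uu_j$ as the common outward unit normal, the trace identity, and AM--GM on singular values---are routine.
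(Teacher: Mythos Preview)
The paper does not give its own proof of this theorem; it is stated as a classical result of Fritz John, with citations to \cite{John48} and \cite{Ball}, and is then used as a black box in the proof of Lemma~\ref{Jlemma}. So there is nothing in the paper to compare your argument against at the level of proof strategy.

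Your proposal is a correct self-contained proof and follows essentially the standard modern treatment (as in Ball's exposition): linear change of variables to reduce to the unit ball, a Hahn--Banach/cone separation argument for necessity, and the trace--AM/GM inequality for sufficiency and uniqueness. Two small points are worth tightening. First, in the sufficiency and uniqueness step you only test against ellipsoids $T\IB^N$ centered at the origin; you should remark that for an origin-symmetric body $K'$ this suffices, since if $E'=T\IB^N+\pp\subset K'$ then also $-E'\subset K'$ and hence $T\IB^N=\tfrac12(E'+(-E'))\subset K'$ with the same volume. Second, in the necessity step you implicitly use that the convex cone generated by $\{\uu\uu^T:\uu\in S\}$ is closed, so that failure of the decomposition genuinely yields a separating $H$; this holds because that generating set is compact and lies in the affine hyperplane $\{\mathrm{tr}=1\}$, hence is bounded away from~$0$. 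With these two remarks made explicit, the argument is complete.
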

 The solid ellipsoid $E$ is called the {\it L\"owner--John ellipsoid}. Note that the vectors $A^{-1}\xx_j$ lie in the unit sphere.

 \begin{example}Condition \eqref{J} holds  in the following cases: \begin{itemize}
 \item $k=N$,  the $A^{-1}\xx_j$ are orthonormal, and $c_j=1$ for $j=1,\dots, N$.

 \item $N=2,\ k\ge 3, \ A^{-1}\xx_j=[\cos(2\pi j/k),\sin (2\pi j/k)]^T$ and $c_j=2/k$ for $0\le j\le k-1$.

 \end{itemize}\end{example}

 \begin{remark}\label{Jremark} For example, if the $A_i$ are diagonal positive-definite matrices, then \eqref{innercontact} holds for $\nn=\ee_j$ (the standard basis vectors),     and the conditions in Theorem~\ref{John} are met with ${\bf x}_j=A_{\rm sum} \ee_j$.
\end{remark}

A formula for the maximal volume inner ellipsoid of the Minkowski sum $E_1+E_2$  was given by Chernousko \cite{Chernousko} (see also \cite{Kurzhanski1}). Chernousko's formula (equation \eqref{Ejohn2} below) can be described in terms of the operator geometric mean:

\begin{definition}  Let $P,\,Q$ be positive-definite symmetric matrices. The geometric mean $P\#Q$ of $P$ and $Q$ is given by $$P\#Q  := P^{1/2}\left(P^{-1/2}QP^{-1/2}\right)^{1/2}P^{1/2}\,.$$ \end{definition}

We note that $P\#Q=Q\#P$, and  $$P\#Q=P^{1/2}Q^{1/2}\ \iff\ PQ=QP\,.$$ (See \cite{Bhatia}.) The geometric mean $P\#Q$ can also be interpreted as the midpoint of the geodesic from $P$ to $Q$ in the Riemannian metric on the space of positive-definite matrices \cite[Th.~6.1.6]{Bhatia}.

\begin{theorem}\label{Jellipsoid}{\rm (Chernousko \cite{Chernousko})} Let $A,\,B$ be symmetric positive-definite $N\times N$ matrices, and let  $E_A,E_B$ be given by \eqref{impl1}. Then the L\"owner--John ellipsoid (maximal volume inner ellipsoid) $E_\John$ for the Minkowski sum $E_A+E_B$  is given by \beq\label{Ejohn2}E_\John=\{F(A,B)\,\vv:\|\vv\|<1\},\quad  F(A,B) := \left[A^2+2\,A^2\#B^2                    +B^2\right]^{1/2}.\eeq\end{theorem}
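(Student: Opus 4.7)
The plan is to apply John's theorem (Theorem~\ref{John}) to the candidate ellipsoid $E_\John = E_{F(A,B)}$: I need to establish (i) the containment $E_\John \subset E_A + E_B$, (ii) a family of contact points $\xx_j \in \partial E_\John \cap \partial(E_A + E_B)$, and (iii) positive constants $c_j$ satisfying the tensorial identity \eqref{J}. The key preliminary is an algebraic simplification. Setting $M \doteq A^{-1}B^2 A^{-1}$, which is symmetric and positive-definite, the definition of the operator geometric mean gives $A^2 \# B^2 = A M^{1/2} A$, and obviously $B^2 = AMA$. Substituting these into the definition of $F(A,B)^2$ yields the factorization
\[
F(A,B)^2 \,=\, A^2 + 2\,A M^{1/2} A + A M A \,=\, A(I + M^{1/2})^2 A.
\]
I would then fix an orthonormal basis $\ww_1,\dots,\ww_N$ of eigenvectors of $M$ with eigenvalues $\mu_j > 0$ and set $\vv_j \doteq A^{-1}\ww_j$, so that $B^2 \vv_j = \mu_j A^2 \vv_j$.

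For the containment, by the criterion \eqref{Einner} it is enough to show $\|F\vv\| \le \|A\vv\| + \|B\vv\|$ for all $\vv \in \IR^N$. Squaring and using the factorization of $F^2$, this reduces to $\vv^T(A^2\#B^2)\vv \le \|A\vv\|\,\|B\vv\|$. Writing $\ww = A\vv$, the left side becomes $\ww^T M^{1/2} \ww$ and the right side $\sqrt{(\ww^T\ww)(\ww^T M\ww)}$, so this is exactly Cauchy--Schwarz applied to the pair $\ww,\,M^{1/2}\ww$. Equality holds precisely when $\ww$ is an eigenvector of $M$, i.e., $\vv \propto \vv_j$ for some $j$. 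A short computation using $\|\ww_j\|=1$ gives $\|A\vv_j\|=1$, $\|B\vv_j\|=\sqrt{\mu_j}$, and $\|F\vv_j\|=1+\sqrt{\mu_j}$, confirming equality in \eqref{innercontact}; hence the points $\xx_j \doteq F^2\vv_j / \|F\vv_j\|$ lie in $\partial E_\John \cap \partial(E_A + E_B)$.

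Finally, John's identity \eqref{J}, with $F$ playing the role of the defining matrix in Theorem~\ref{John}, reads $\sum_j c_j (F^{-1}\xx_j)(F^{-1}\xx_j)^T = \II_N$. Since $F^{-1}\xx_j = F\vv_j/(1+\sqrt{\mu_j})$, this is equivalent to $\sum_j c_j (1+\sqrt{\mu_j})^{-2}\, \vv_j\vv_j^T = F^{-2}$, and the factorization supplies the spectral expansion
\[
F^{-2} \,=\, A^{-1}(I+M^{1/2})^{-2} A^{-1} \,=\, \sum_{j=1}^N (1+\sqrt{\mu_j})^{-2}\,\vv_j \vv_j^T,
\]
so $c_j = 1 > 0$ works and uniqueness follows from Theorem~\ref{John}. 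The main obstacle is spotting the factorization $F(A,B)^2 = A(I+M^{1/2})^2 A$; once this is in hand, the whole argument reduces to simultaneous diagonalization of the pair $(A^2,B^2)$, one application of Cauchy--Schwarz, and a direct spectral expansion of $F^{-2}$.
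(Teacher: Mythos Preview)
Your proof is correct and follows essentially the same approach as the paper: both verify John's criterion (Theorem~\ref{John}) with $c_j=1$ at the contact points coming from the eigenvectors of $M=A^{-1}B^2A^{-1}$, i.e., the simultaneous eigenvectors of $(A^2,B^2)$. The only organizational difference is that the paper first applies the affine map $A^{-1}$ to reduce to $\IB^N+E_{\widetilde B}$ (where the candidate is simply $E_{\II+\widetilde B}$ and containment is the triangle inequality) and then recovers $F=(SS^T)^{1/2}$ with $S=A(\II+M^{1/2})$, whereas you unpack the geometric mean directly to the same factorization $F^2=A(\II+M^{1/2})^2A$ and establish containment via Cauchy--Schwarz; these are equivalent presentations of the same argument.
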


\begin{remark}
We  note that $$E_\John=E_{\rm sum}  \iff F(A,B)=A+B \iff AB=BA \iff (AB)=(AB)^T\,,$$ which occurs  when $E_A$ and $E_B$ share all their axes.\end{remark}

We provide here a short proof of Theorem \ref{Jellipsoid} using Theorem \ref{John}. First we describe $E_\John$ using a non-symmetric matrix:

\begin{lemma}\label{Jlemma} Let $A,\,B$ be symmetric positive-definite $N\times N$  matrices. Then the L\"owner--John ellipsoid  for the Minkowski sum $E_A+E_B$  is given by \beq\label{Sformula} E_\John=\{S\vv:\|\vv\|<1\},\quad S=A \,\left[\II+(A^{-1}B^2A^{-1})^{1/2}\right].\eeq
\end{lemma}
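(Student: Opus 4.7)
The plan is to apply F.\ John's criterion (Theorem~\ref{John}) to the candidate ellipsoid $E_\John=\{S\vv:\|\vv\|<1\}$. Write $T\doteq (A^{-1}B^2A^{-1})^{1/2}$, so $T$ is symmetric positive-definite and $S=A(\II+T)$. A direct computation gives
$$SS^T\,=\,A(\II+T)^2A\,=\, A^2+2\,ATA+B^2\,=\, A^2+2(A^2\# B^2)+B^2\,=\,F(A,B)^2,$$
using the identity $A^2\# B^2=A(A^{-1}B^2A^{-1})^{1/2}A$. Thus the symmetric matrix defining $E_\John$ is $F(A,B)$, so Lemma~\ref{Jlemma} is consistent with Theorem~\ref{Jellipsoid}.

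For the containment $E_\John\subset E_A+E_B$, the criterion \eqref{Einner} reduces matters to proving $\|F(A,B)\vv\|\le \|A\vv\|+\|B\vv\|$ for every $\vv\in\IR^N$. Squaring both sides and canceling $\|A\vv\|^2+\|B\vv\|^2$, this is equivalent to $\vv^T(A^2\# B^2)\vv\le \|A\vv\|\,\|B\vv\|$. Substituting $\ww=A\vv$, one has $\vv^T(A^2\# B^2)\vv=\ww^T T\ww$ and $\|B\vv\|^2=\ww^T T^2\ww=\|T\ww\|^2$, so the inequality becomes $\ww^T T\ww\le \|\ww\|\,\|T\ww\|$, which is Cauchy--Schwarz applied to $\ww$ and $T\ww$.

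To verify John's identity \eqref{J}, I locate the contact points, where equality holds in the Cauchy--Schwarz step above, i.e., where $T\ww$ is parallel to $\ww$. Let $\ee_1,\dots,\ee_N$ be an orthonormal eigenbasis of $T$ with $T\ee_j=\mu_j\ee_j$, $\mu_j>0$, and set $\nn_j$ to be the unit vector parallel to $A^{-1}\ee_j$. By \eqref{innercontact} the boundary points $\xx_j\doteq\xx_{\d(E_A+E_B)}(\nn_j)$ lie in $\d E_\John\cap \d(E_A+E_B)$, and inserting the definitions into \eqref{new-mult} yields $\xx_j=(1+\mu_j)A\ee_j$. Applying $S^{-1}=(\II+T)^{-1}A^{-1}$ and using $T\ee_j=\mu_j\ee_j$ gives $S^{-1}\xx_j=\ee_j$. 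Since $\{\ee_j\}$ is orthonormal, $\sum_{j=1}^N(\yy^T\ee_j)\ee_j=\yy$ for all $\yy$, so John's identity \eqref{J} holds with $k=N$ and $c_j=1$, and Theorem~\ref{John} identifies $E_\John$ as the unique maximal-volume inner ellipsoid.

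The principal obstacle is spotting the substitution $\ww=A\vv$ that converts the operator-geometric-mean inequality $\vv^T(A^2\# B^2)\vv\le \|A\vv\|\,\|B\vv\|$ into plain Cauchy--Schwarz; after that, the John identity essentially falls out, because $S$ has been engineered so that $S^{-1}$ sends the natural contact points $\xx_j$ back to the orthonormal frame $\{\ee_j\}$.
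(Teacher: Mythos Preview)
Your argument is correct and follows essentially the same route as the paper: both verify John's criterion by exhibiting $N$ contact points coming from an eigenbasis (your $\ee_j$ for $T$; the paper's $\nn_j$ for $B$ after first reducing to $A=\II$ via the linear map $A^{-1}$), and your substitution $\ww=A\vv$ is precisely that reduction carried out pointwise rather than geometrically. One small wrinkle to clean up: Theorem~\ref{John} is stated for the \emph{symmetric} defining matrix, so \eqref{J} literally requires the $F(A,B)^{-1}\xx_j$ (not the $S^{-1}\xx_j$) to give a resolution of the identity; this follows at once from your computation since $F(A,B)^{-1}S=(SS^T)^{-1/2}S$ is orthogonal, hence $\{F(A,B)^{-1}\xx_j\}$ is again orthonormal.
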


\begin{proof}  We first consider the case where $A=\II$. To show that $E_\John$ is defined by $A_{\rm sum} =\II+B$ in this case, we let $\nn_1,\dots,\nn_N$ be orthonormal eigenvectors of $B$ with eigenvalues $\lambda_1,\dots,\lambda_N$, respectively. Then $$
\|A_{\rm sum} \nn_j\|=1+\lambda_j= \|\II\nn_j\|+\|B\nn_j\|\,,$$ and thus by \eqref{innercontact},
 $${\bf x}_{\partial E_{\rm sum} }(\nn_j) = \frac{A_{\rm sum} ^2 {\nn_j}}{\|A_{\rm sum}  \nn_j\|}=(1+\lambda_j)\nn_j\in \partial E_{\rm sum} \cap\partial (E_A+E_B)\,,$$ where $E_{\rm sum} =\{A_{\rm sum} \uu:\|\uu\|<1\}$.  Then $A_{\rm sum} ^{-1}\,\xx_{\partial E_{\rm sum} }(\nn_j)=\nn_j$, and therefore John's condition \eqref{J} (with $\xx_j= {\bf x}_{\partial E_{\rm sum} }(\nn_j)$, $c_j=1$,  and $A$ replaced by $A_{\rm sum} $) is satisfied for the solid ellipsoid $E_{\rm sum} $.

 For the general case, we apply the linear transformation $A^{-1}$, so that we have
 $$ A^{-1}\cdot E_A=\IB^N,\quad A^{-1}\cdot E_B= \left\{\widetilde B\vv:\|\vv\|<1\right\},$$ where \beq\label{wB}\widetilde B :=\left[(A^{-1}B)(A^{-1}B)^T\right]^{1/2}= \left(A^{-1}B^2A^{-1}\right)^{1/2}\,.\eeq Since  $A^{-1}\cdot E_\John$ is the L\"owner--John ellipsoid for $A^{-1}\cdot E_A+A^{-1}\cdot E_B$, we have
 $$A^{-1}\cdot E_\John=\left\{(\II+\widetilde B)\vv:\|\vv\|<1\right\}.$$ Left-multiplying by $A$ and applying \eqref{wB}, we obtain \eqref{Sformula}.\end{proof}

 To complete the proof of Theorem~\ref{Jellipsoid}, we have by Lemma~\ref{Jlemma} and \eqref{impl}, $$E_\John=\{(SS^T)^{1/2}\vv:\|\vv\|<1\}\,,$$ where $S$ is given by \eqref{Sformula}.  We then have
 \beq\label{Ajohn1}  (SS^T)^{1/2} = \left[A^2+2A(A^{-1}B^2A^{-1})^{1/2}A+B^2\right]^{1/2}=F(A,B)\,.\eeq \qed

\begin{example}\label{example-John}

\begin{figure}[h]
  \centering
\includegraphics[width=6cm]{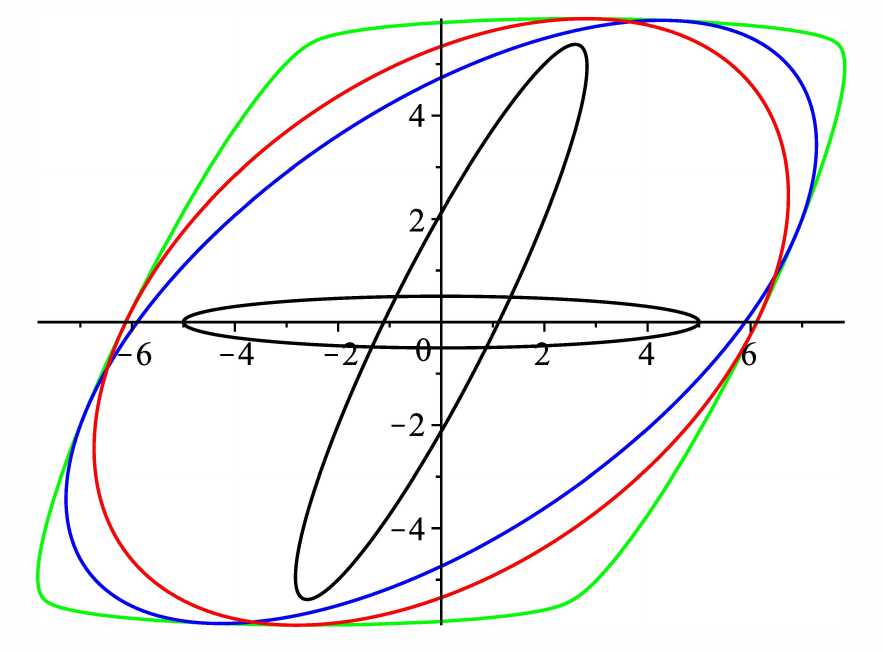}
\vskip 1.0 true in
\caption{Inner ellipses}
  \label{figure-John}
\end{figure}

Figure~\ref{figure-John} illustrates Theorem~\ref{Jellipsoid} using the matrices
$$A=\begin{pmatrix} 5&0\\0&\frac 12\end{pmatrix},\qquad B=\begin{pmatrix} 2&2\\2&5\end{pmatrix}.$$ In the figure, the ellipses $\d E_A,\,\d E_B\subset\IR^2$ are black, the Minkowski sum boundary $\d (E_A+E_B)$ is green, $\d E_{\rm sum} =\d E_{A+B}$ is blue, and $\d E_\John$ is red.

 The area of  $E_\John$ is approximately 113.14, whereas $E_{\rm sum} $ has area $\approx$\, 108.38.\end{example}

\begin{remark}  Kurzhanski--V\'alyi \cite{Kurzhanski1} gives a family of inner ellipsoids  $E_{\widehat S}$, with
\begin{equation*} \widehat S^2\,=\,
 S^{-1}\left[(S A^2 S)^{\frac{1}{2}} \,+\, (S B^2 S)^{\frac{1}{2}}\right]^2 S^{-1} \,,
\label{new-mult2}
\end{equation*}
where $S$ is any symmetric positive-definite matrix. The union of the ellipsoids $E_{\widehat S}$  covers the entire Minkowski sum $E_A+E_B$. If $S=A^{-1}$ or $S=B^{-1}$, then  $\widehat S =F(A,B)$.\end{remark}

\subsection{Comparison with Brunn-Minkowski}

The  Brunn-Minkowski inequality states that
$$ |{\rm \Vol}(K_1 + K_2)|^{1/N} \,\geq\, |{\rm \Vol}(K_1)|^{1/N} \,+\, |{\rm \Vol}(K_2)|^{1/N} \,. $$
(If $K_1,K_2$ are both convex and have positive volume, then equality holds if and only if they are homothetic, i.e., $K_1 = c K_2$.) For ellipsoidal bodies $E_1$ and $E_2$, the first inequality of \eqref{volbounds} gives a sharper inequality than Brunn-Minkowski:

\begin{theorem}\label{lowerbound} Let $E_i=A_i\cdot\IB^N$,  $i=1,2$, be ellipsoids, where $A_1,A_2$ are positive-definite symmetric matrices. Then
\begin{eqnarray*}
\left[ \Vol (E_1+E_2)\right]^{1/N}
&\ge& \Vol(\mathbb{B}^{N})^{1/N} \det\left(F(A_1,A_2)\right)^{1/N} \\
&\ge& \Vol(\mathbb{B}^{N})^{1/N} \det\left(A_1+A_2\right)^{1/N} \\
&\ge& \Vol(E_1)^{1/N}+\Vol(E_2)^{1/N}\,,
\end{eqnarray*}
{ where $F(A,B)$ is given by \eqref{Ejohn2} (or equivalently by \eqref{Ajohn1}).}

\end{theorem}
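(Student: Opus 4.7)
The plan is to peel off the three inequalities one at a time, each by invoking a different tool already developed in the paper together with the classical Minkowski determinant inequality.

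First, for the leftmost inequality I would use Theorem~\ref{Jellipsoid}, which gives the L\"owner--John ellipsoid $E_\John = F(A_1,A_2)\cdot\IB^N \subset E_1 + E_2$. Since $\Vol(E_\John) = \Vol(\IB^N)\det F(A_1,A_2)$, monotonicity of volume under inclusion yields $\Vol(E_1+E_2) \ge \Vol(\IB^N)\det F(A_1,A_2)$, and taking $N$th roots produces the first line.

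Second, for the middle inequality I would combine two observations already in Section~\ref{containment}: (a) $E_{\rm sum} = (A_1+A_2)\cdot\IB^N \subset E_1 + E_2$ by \eqref{Ainner}, and (b) $E_\John$ is the \emph{maximal}-volume inscribed ellipsoid. Together these force $\Vol(E_{\rm sum}) \le \Vol(E_\John)$, i.e.\ $\det(A_1+A_2) \le \det F(A_1,A_2)$, which after the $N$th root and multiplication by $\Vol(\IB^N)^{1/N}$ is exactly the middle inequality. (As a sanity check, one can also derive this directly from the concavity of $\det^{1/N}$ on positive-definite matrices applied to $F(A_1,A_2)^2 = A_1^2 + 2(A_1^2\#A_2^2) + A_2^2$, using $\det(P\#Q) = \sqrt{\det P\det Q}$; this even gives the stronger estimate $\det F(A_1,A_2)^{1/N} \ge \det(A_1)^{1/N}+\det(A_2)^{1/N}$, so the middle inequality is not tight in general.)

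Third, for the rightmost inequality I would simply quote the classical Minkowski determinant inequality, $\det(A_1+A_2)^{1/N} \ge \det(A_1)^{1/N} + \det(A_2)^{1/N}$ for positive-definite symmetric matrices, and multiply by $\Vol(\IB^N)^{1/N}$, using $\Vol(E_i)=\Vol(\IB^N)\det A_i$ to identify the right-hand side.

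None of the three steps looks like a real obstacle: the first two are direct consequences of inclusion relations proved earlier in the paper, and the last is a standard inequality. The only subtle point to verify is that the identification $\Vol(E_\John)=\Vol(\IB^N)\det F(A_1,A_2)$ uses that $F(A_1,A_2)$ is the (unique) symmetric positive-definite square root appearing in \eqref{Ejohn2}, so that $|\det F(A_1,A_2)|=\det F(A_1,A_2)$; this follows from positive-definiteness of $A_1^2 + 2(A_1^2\#A_2^2) + A_2^2$, itself immediate from positivity of each summand.
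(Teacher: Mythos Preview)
Your proposal is correct and follows essentially the same route as the paper: the first two inequalities come from the containment $E_{\rm sum}\subset E_1+E_2$ together with the volume-maximality of the John ellipsoid $E_\John=F(A_1,A_2)\cdot\IB^N$, and the last from the Minkowski determinant inequality applied to $A_1+A_2$. Your additional remarks (the alternate route via concavity of $\det^{1/N}$ and the positivity check for $F(A_1,A_2)$) are fine but not needed for the argument.
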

\begin{proof} The first two inequalities of the theorem follow from the optimality of John's ellipsoid and  \eqref{volbounds}.
The Minkowski inequality for determinants yields
\begin{eqnarray*}\Vol(\mathbb{B}^{N})^{1/N} \det\left( A_1+A_2\right)^{1/N}   &\ge &  \Vol(\mathbb{B}^{N})^{1/N}\left[(\det A_1)^{1/N}+(\det A_2)^{1/N}\right]\\&=&\Vol(E_1)^{1/N}+\Vol(E_2)^{1/N}\,.\end{eqnarray*}\end{proof}

Consider for example, the case when $N=2$ and $E_1$, $E_2$ are  degenerate ellipses with $A_1 = {\rm diag}[a_1,0]$ and
$A_2 = {\rm diag}[0,a_2]$. Each has zero area, but the Minkowski sum will be a $(2 a_1) \times (2 a_2)$ rectangle. In this
case, Brunn-Minkowski gives $4 a_1  a_2 > 0$ while Theorem~\ref{lowerbound}  gives $4a_1a_2>\pi a_1 a_2$.

Theorem \ref{lowerbound} generalizes to the sum of $m$ ellipsoids, although the bounds become looser.
For example,
given three ellipsoids $E_1,\,E_2,\,E_3$, defined by $A_1$, $A_2$, $A_3$, a lower bound on the Minkowski sum determinant can be obtained recursively by considering the three positive matrices
$$ A' = F(F(A_1,A_2),A_3),\  A'' = F(F(A_1,A_3),A_2),\ A''' = F(F(A_3,A_2),A_1), $$ each defining an inner ellipsoid for $E_1+E_2+E_3$. Then
\begin{eqnarray*}
\left[ \Vol (E_1+E_2+E_3)\right]^{1/N}
&\ge& \Vol(\mathbb{B}^{N})^{1/N} \max\{\det A',\det A'',\det A'''\}^{1/N} \\
&\ge& \Vol(\mathbb{B}^{N})^{1/N} \det\left(A_1+A_2+A_3\right)^{1/N} \\
&\ge& \Vol(E_1)^{1/N}+\Vol(E_2)^{1/N}+\Vol(E_3)^{1/N}\,.
\end{eqnarray*}

\subsection{Upper Bounds for Minkowski Sums of Ellipsoids}\label{upper}

A family of solid ellipsoids containing the Minkowski sum $E_1+\cdots +E_m$ is given as follows
(see \cite{Durieu,Kurzhanski1,Sholokhov}):

\begin{equation} E_\Outer  ^\gamma=\left\{A_\gamma\vv:\|\vv\|<1\right\}\,,\quad
A^2_\gamma \, :=\, \sum_{i=1}^{m} \gamma_i A_i^2\,, \quad E_i=A_i\cdot\IB^N\,,
\label{genbound1}
\end{equation}
for $\gamma=(\gamma_1,\dots,\gamma_m)$ with
\begin{equation}
\gamma_i >  0 \,\,\, {\rm and} \,\,\,
\sum_{i=1}^{m}  \frac{1}{\gamma_i} = 1\,.
\label{gamma1}
\end{equation}

By the method of Section~\ref{containment},
 $E_1+\cdots +E_m\subseteq E^\gamma_\Outer  $ if and only if
\beq\label{Agamma} \sum_{i=1}^{m} \| A_i {\bf u} \| \leq \|A_\gamma\uu\|,\quad \forall \uu\in\IR^N \,. \eeq
In fact, if $\gamma_i{>0}$ for $i=1,...,m$, then by the Cauchy-Schwarz inequality,
$$ \sum_{i=1}^{m} \| A_i {\bf u} \| \,=\, \sum_{i=1}^{m} \gamma_i^{-1/2} \| \gamma_i^{1/2} A_i {\bf u} \|
\,\leq\, \left(\sum_{i=1}^{m} \gamma_i^{-1}\right)^{\frac{1}{2}} \left(\sum_{i=1}^{m} \| \gamma_i^{1/2} A_i {\bf u} \|^2\right)^{\frac{1}{2}} \,. $$
Imposing the constraint
$$ \sum_{i=1}^{m} \gamma_i^{-1} = 1 $$
and observing that
$$ \left(\sum_{i=1}^{m} \| \gamma_i^{1/2} A_i {\bf u} \|^2\right)^{\frac{1}{2}} =
 \left(\sum_{i=1}^{m} {\bf u}^T(\gamma_i A_i^2) {\bf u} \right)^{\frac{1}{2}} =
 \left({\bf u}^T A^2_\gamma  {\bf u} \right)^{\frac{1}{2}} = \|A_\gamma \uu\|$$
  reproduces the well-known constraint \eqref{Agamma}, and hence $E_1+\cdots +E_m\subset E^\gamma_\Outer  $ when $\ga$ satisfies \eqref{gamma1}.

In the case when $m=2$, the outer ellipsoid defined by \eqref{genbound1}--\eqref{gamma1} has minimal volume when \beq\gamma_1=1+\beta^{-1}\,,\quad  \gamma_2=1+\beta\,,\label{gammabeta}\eeq where $\beta$ is the (unique) positive solution of the equation \beq\sum_{j=1}^N \frac {1-\beta^2\,\lambda_j^2(A_1^{-1}A_2)}{1+\beta\,\lambda_j^2(A_1^{-1}A_2)}=0\;;\label{beta}\eeq see \cite{Halder, Kurzhanski1,Sholokhov}. Thus,  \eqref{volbounds} provides the upper bound on the volume
\beq\label{volupper} \vol(E_1+E_2) \le \vol(\IB^N)\det(\ga_1A_1^2+\ga_2A_2)^{1/2}\,,\eeq
where $\ga_1,\ga_2$ are given by \eqref{gammabeta}--\eqref{beta}. When $A_2 = RA_1 R^T$, with $R$ being a rotation matrix, symmetry yields $\beta=1$ and thus $\gamma_1 = \gamma_2 = 2$.

In some contexts where rapid computations are required, alternative choices can be made with good effect such as
\begin{equation} \beta'=\sqrt{\frac{{\rm tr}(A_1^2)}{{\rm tr}(A_2^2)}}
\ \Longrightarrow\
\gamma'_1 = 1 + \sqrt{\frac{{\rm tr}(A_2^2)}{{\rm tr}(A_1^2)}} \,\,\,\, {\rm and} \,\,\,\,\gamma'_2 = 1 + \sqrt{\frac{{\rm tr}(A_1^2)}{{\rm tr}(A_2^2)}}.
\label{heuristic1}
\end{equation}
The choice in (\ref{heuristic1})  gives the enclosing ellipsoid
that minimizes the sum of squared semi-axes lengths \cite[Lemma~2.5.2]{Kurzhanski1}.
A heuristic choice motivated by \eqref{heuristic1}  for the case of an $m$-fold Minkowski sum is
\begin{equation} \gamma'_i = \frac{\sum_{j=1}^{m} \sqrt{{\rm tr}(A_j^2)}}{\sqrt{{\rm tr}(A_i^2)}} \,.
\label{heuristic222}
\end{equation}

It can be shown that the minimal volume ellipsoid of the form \eqref{genbound1} enclosing an $m$-fold Minkowski sum is defined by \cite{Halder,Kurzhanski1}
\begin{equation}
A({\bf l}) \,=\, \left(\sum_{j=1}^{m} \|A_j {\bf l}\|\right)^{\frac{1}{2}} \left(\sum_{i=1}^{m} \frac{A_i^2}{\|A_i {\bf l}\|}\right)^{\frac{1}{2}}
\label{minvolm0}
\end{equation}
where ${\bf l}$ is the unit vector  ${\bf u} \in \mathbb{S}^{N-1}$ that
minimizes $\det A({\bf u})$. That is,  the optimal choice for $\gamma_i$ is defined by
\beq\label{thegammas} \frac{1}{\gamma_i} \, :=\, \frac{\|A_i {\bf l}\|}{\sum_{j=1}^{m} \|A_j {\bf l}\|}\,. \eeq
An algorithm for finding ${\bf l}$ so that $A({\bf l})$ defines the minimal volume ellipsoid
is given in \cite{Halder}.

Thus we have the upper bound \beq\label{upperbound1} \vol_N(E_1+\cdots+E_m) \le \vol(\IB^N)\det A({\bf l})\,,\eeq where $A({\bf l})$ is given by \eqref{minvolm0}.
Applying the isoperimetric inequality of Theoorem~\ref{iso1+t}, we obtain the following upper bound:
\begin{theorem}\label{upperboundt} Let $\Sigma=E_1+\dots+ E_m$, where $E_1,\dots,E_m$ are ellipsoids in $\IR^N$ given as in Theorem~\ref{symmetric}. Then
\beq\label{upperbound2}\vol_N(\Sigma)\le \frac{[\det  A({\bf l})]^{1/N}}N\,\vol_{N-1}(\d\Sigma)\,,\eeq
 where $A({\bf l})$ is given by \eqref{minvolm0}.\end{theorem}

\section{Bounds on Volume Using Steiner's Formula}\label{s-steiner}

Given a convex body $K \subset \mathbb{R}^N$,
{\it Steiner's Formula} gives the volume of the offset body as
\begin{equation}\label{steiner}
\vol(K+ r \mathbb{B}^N) = \sum_{j=0}^{N} \left(\begin{array}{c}
N \\
j \end{array}\right) W_j(K) r^j \,,\end{equation}
where  the quantities $W_j(K)$ are the {\it quermasssintegrals} of $K$.  (See \cite{BZ, Gruber, Kurzhanski1}.) In particular,
Steiner's Formula for the area of the Minkowski sum of a 2D convex body $K$ with a disk of
radius $r$ is
$$ {\mathcal A}(K+ r\mathbb{B}^2) \,=\, {\mathcal A}(K) + r {\mathcal L}(\partial K) + \frac{r^2}{2} {\mathcal K}(\partial K) $$
where ${\mathcal A}(K)$ is the area of $K$, ${\mathcal L}(\partial K)$ is the length of the boundary (i.e., its perimeter), and
$$ {\mathcal K}(\partial K) = 2\pi $$
is the integral of curvature around the boundary. Consequently,
$$ {\mathcal A}(K+ r\mathbb{B}^2) \,=\, {\mathcal A}(K) + r {\mathcal L}(\partial K) +
{\mathcal A}(r\mathbb{B}^2) \,.$$
Since
$$ {\mathcal A}(E_{A_1} + E_{A_2}) = (\det A_2) {\mathcal A}(A_{2}^{-1} E_{A_1} + \mathbb{B}^2)\,, $$
Steiner's formula can be used to compute ${\mathcal A}(E_{A_1} + E_{A_2})$ exactly as
\begin{multline} {\mathcal A}(E_{A_1} + E_{A_2}) = {\mathcal A}(E_{A_1}) + (\det A_2) {\mathcal L}(\partial(A_{2}^{-1} E_{A_1})) +
 {\mathcal A}(E_{A_2}) \\  = \pi(\det A_1+\det A_2) +4\lambda_1 {(\det A_2)} {\bf E}\left([1-\lambda_2^2/\lambda_1^2]^{1/2}\right),\label{2sum}\end{multline} where $\lambda_1,\lambda_2$ are the eigenvalues of $A_{2}^{-1}{A_1}$ and ${\bf E}(x) := \int_0^{\pi/2} \sqrt{1-x^2\sin^2\theta}\,d\theta$ is an elliptic integral.

Letting
$K = A_{3}^{-1}(E_{A_1} + E_{A_2})$, it is possible to repeat the same procedure as above to compute
the exact area ${\mathcal A}(E_{A_1} + E_{A_2} + E_{A_3})$ as
$$ {\mathcal A}(E_{A_1} + E_{A_2} + E_{A_3}) = {\mathcal A}(E_{A_1} + E_{A_2} ) + (\det A_3) {\mathcal L}(\partial(A_{3}^{-1} E_{A_1} + A_{3}^{-1} E_{A_2})) +
 {\mathcal A}(E_{A_3}) \,. $$
This is exactly computable  using \eqref{2sum} and  the general fact that
$$ {\mathcal L}(\partial(K_1+K_2)) = {\mathcal L}(\partial K_1) + {\mathcal L}(\partial K_2) \,. $$
This approach leads to a recursive algorithm to exactly compute the area of an $m$-fold Minkowski sum of
ellipses in the plane.

In higher dimensions, we can take a similar approach to tightly bound the volume of Minkowski sums, but
the approach will no longer give an exact equality for $m >2$ ellipsoids. This is now demonstrated in the 3D case.

Steiner's Formula for the volume of the Minkowski sum of a 3D convex body with a ball of
radius $r$ is
$$ \vol (K+ r\mathbb{B}^3) \,=\, \vol (K) + r {\mathcal A}(\partial K) + {r^2} {\mathcal M}(\partial K) +
\frac{r^3}{3} {\mathcal K}(\partial K) $$
where
$\vol (K)$ is the volume of $K$, ${\mathcal A}(\partial K)$ is the surface area of the boundary $\partial K$, and
$ {\mathcal M}(\partial K)$ and $ {\mathcal K}(\partial K)$
are respectively the integral of mean and Gaussian curvature over the whole boundary $\partial K$.
From the Gauss-Bonnet Theorem,
$$ {\mathcal K}(\partial K) = 2\pi \chi(\partial K) = 4\pi. $$
Consequently
$$ \vol (K+ r\mathbb{B}^3) \,=\, \vol (K) + r {\mathcal A}(\partial K) + {r^2} {\mathcal M}(\partial K) + \vol (r\mathbb{B}^3) \,.$$
Therefore, for the sum of a pair of 3D ellipsoids
$$ \vol (E_{A_1} + E_{A_2}) = (\det A_2) \vol (A_{2}^{-1} E_{A_1} + \mathbb{B}^2)\,, $$
the same approach as in the planar case can be used to exactly compute
\begin{multline*} \vol (E_{A_1} + E_{A_2}) \,=\, \vol (E_{A_1}) +
(\det A_2) {\mathcal A}(\partial (A_2^{-1} E_{A_1}))\\ + (\det A_2) {\mathcal M}(\partial (A_2^{-1} E_{A_1})) + \vol (E_{A_2}) \,. \end{multline*}
The quantities in this formula can be computed using \eqref{intM}.
In particular, for an ellipsoid $E$,
$$ {\mathcal A}(\partial E) \,=\, \int_{{\bf n} \in \mathbb{S}^{2}} \det \tilde{C}_E({\bf n}) \, d\sigma_2({\bf n})\,.  $$
 Furthermore, by Theorem~\ref{curvatures},
\begin{multline} {\mathcal M}(\partial E_{A})\,=\, \frac{1}{2} \int_{\mathbb{S}^{2}} {\rm tr}\left[\tilde{C}_E({\bf n})^{-1}\right] \, \det \tilde{C}_E({\bf n}) \, d\sigma_2({\bf n})\\ =  \frac{1}{2} \int_{ \mathbb{S}^{2}} {\rm tr}[\tilde{C}_E({\bf n})]  \, d\sigma_2({\bf n})\,=\, \frac{1}{2} \int_{ \mathbb{S}^{2}} {\rm tr}[{C}_E({\bf n})]  \, d\sigma_2({\bf n})  \,. \label{meancurv}\end{multline}
%From commutativity of the Minkowski sum, it must be that
%\begin{multline*} (\det A_2) {\mathcal A}(\partial (A_2^{-1} E_{A_1})) + (\det A_2) {\mathcal M}(\partial (A_2^{-1} E_{A_1}))\\=\,  (\det A_1) {\mathcal A}(\partial (A_1^{-1} E_{A_2})) + (\det A_1) {\mathcal M}(\partial (A_1^{-1} E_{A_2}))\,.\end{multline*}
If it is assumed that ${\mathcal A}$ and ${\mathcal M}$ are exactly computable for 3D ellipsoids, then the above
provides a closed-form formula for $\vol (E_{A_1} + E_{A_2})$.

Following the same logic as in the planar case,
\begin{multline}\label{3sum} \vol (E_{A_1} + E_{A_2} + E_{A_3}) =\vol (E_{A_1} + E_{A_2}) +
(\det A_3) {\mathcal A}(\partial (A_3^{-1} E_{A_1} + A_3^{-1} E_{A_2}))\\ +  (\det A_3) {\mathcal M}(\partial (A_3^{-1} E_{A_1}+A_3^{-1} E_{A_2})) + \vol (E_{A_3}) \,, \end{multline}
where $\vol (E_{A_1} + E_{A_2})$ is fed forward from the previous step.
Moreover,  by the additivity of $W_2$ in 3D (see \cite{Gruber,Schneider}, or by \eqref{Csum} and  \eqref{meancurv}),
$$ {\mathcal M}(\partial(K_1+K_2)) = {\mathcal M}(\partial K_1) + {\mathcal M}(\partial K_2) \,, $$
but no such equality exists for surface area. {Since it is known that $\acal(\partial K)=W_1(K)$ is the average area of the orthogonal projections of $K$ onto planes in $\IR^3$ \cite{Schneider}}, it follows that if $E_\inner  \subseteq K \subseteq E_\Outer$ then
\beq\label{EKE}\acal(\d E_\inner ) \leq \acal(\d K) \leq \acal(\d E_\Outer). \eeq
This can  be used together with the ellipsoidal bounds in Section~\ref{s-bounds}  to provide volume bounds in \eqref{3sum}. The same reasoning can be applied in higher dimensions using Steiner's Formula \eqref{steiner}, where the result of Theorem~\ref{curvatures} continues to be applicable.

%\section{Conclusions}

%This paper presents an exact closed-form parametric expression for the boundary of the Minkowski sum of $m$ ellipsoids in $\mathbb{R}^N$ and uses this result to obtain formulas for its principal curvatures and to give  bounds on the enclosed volume.

\section*{Acknowledgements} This work was supported by National Science Foundation
grants  CCF-1640970 and IIS-1619050.

\end{document}